\definecolor{mygreen}{cmyk}{0.64,0.00,0.95,0.40}
\newcommand{\ep}{\varepsilon}
\newcommand{\dual}[1]{\langle{#1}\rangle}
\title{
Convergence of the immersed-boundary\\ finite-element method for the \\Stokes problem%
}
\author{
Norikazu \textsc{Saito}%
\thanks{Graduate School of Mathematical Sciences, The University of Tokyo,
Komaba 3-8-1, Meguro-ku, Tokyo 153-8914, Japan.
\textit{E-mail}: \texttt{norikazu@ms.u-tokyo.ac.jp}}
\and
Yoshiki \textsc{Sugitani}%
\thanks{Graduate School of Mathematical Sciences, The University of Tokyo,
Komaba 3-8-1, Meguro-ku, Tokyo 153-8914, Japan. 
\textit{E-mail}: \texttt{sugitani@ms.u-tokyo.ac.jp}}
}
\begin{document}

\theoremstyle{plain}
\newtheorem{thm}{Theorem}
\newtheorem{prop}[thm]{Proposition}
\newtheorem{cor}[thm]{Corollary}
\newtheorem{lemma}[thm]{Lemma}
\theoremstyle{remark}
\newtheorem{defi}[thm]{Definition}
\newtheorem{remark}[thm]{Remark}
\newtheorem{assum}[thm]{Assumption}
\newtheorem{ex}[thm]{Example}


\maketitle

 \begin{abstract}
Convergence results for the immersed boundary method applied to a model
  Stokes problem with the homogeneous Dirichlet boundary condition are presented. As a discretization method, we deal with the finite element method. First, the
  immersed force field is approximated using a regularized delta
  function and its error in the $W^{-1,p}$ norm is examined for $1\le
  p<n/(n-1)$, $n$ being the space dimension. Then, we
  consider the immersed boundary discretization of the Stokes problem
  and study the
  regularization and discretization errors separately. Consequently,
  error estimate of order $h^{1-\alpha}$ in the $W^{1,1}\times L^1$ norm
  for the velocity
  and pressure is derived, where $\alpha$ is an arbitrarily small positive
  number. Error estimate of order $h^{1-\alpha}$ in the $L^r$ norm
  for the velocity is also derived with $r=n/(n-1-\alpha)$. The validity of those theoretical results
are confirmed by numerical examples. 
 \end{abstract}

{\noindent \textbf{Key words:}
immersed boundary method,
finite element method, 
Stokes equation
}

\bigskip

{\noindent \textbf{2010 Mathematics Subject Classification:}}
65N30,  
65N15,  	
35Q30, 
74F10  	

\section{Introduction}
\label{sec:intro}

The immersed boundary (IB) method is a powerful method for solving a class of
{fluid-structure interaction} problems originally proposed by
Peskin \cite{pes72,pes77} to simulate the blood flow through artificial
heart valves. For later developments, see \cite{pes02}. The IB
method is also successfully applied to multi-phase flow
problems, elliptic interface problems, and so on. 

In contrast to a huge number of applications,
it seems that there are only a few results about {theoretical
convergence analysis}. The pioneering work was done by Y. Mori in 2008 (see
\cite{mor08}). He studied a model (stationary) Stokes problem for the
velocity $u$ and pressure $q$ in an $n$ dimensional
torus $U=[\mathbb{R}/(2\pi\mathbb{Z})]^n\subset\mathbb{R}^n$, 
\setcounter{equation}{-1}
\begin{equation}
\label{eq:mori1}
 - \Delta u +\nabla q  = f-g \mbox{ in }U, \quad
 \nabla \cdot u =0 \mbox{ in } U,
\end{equation}
with
\[
 f(x)=\int_{\Xi} F(\theta)\delta(x-X(\theta))~d\theta,\quad
 g=\frac{1}{(2\pi)^n}\int_\Theta F(\theta)~d\theta.
\]
Herein, the \emph{immersed boundary} $\Gamma\subset U$, which is assumed
to be a hypersurface of $\mathbb{R}^n$, is parameterizaed as
\[
 \Gamma=\{X(\theta)=(X_1(\theta),\ldots,X_n(\theta)) \mid \theta\in
 \Theta\},
\]
where $\Theta$ denotes a subset of $\mathbb{R}^{n-1}$; see Figure \ref{fig:domain0}. The function $F=F(\theta)$ denotes the force
distributed along $\Gamma$ and $\delta=\delta(x)$ the (scalar-valued) Dirac delta
function. (In \cite{mor08}, the case $n=2$ was explicitly mentioned.)
Introducing the regularized delta function $\delta^h\approx \delta$ with a parameter $h>0$, he
considered the regularized Stokes problem
\begin{equation*}
 - \Delta \tilde{u} +\nabla \tilde{q}  = \int_{\Xi} F(\theta)\delta^h(x-X(\theta))~d\theta-g \mbox{ in }U, \quad
 \nabla \cdot \tilde{u} =0 \mbox{ in } U.
\end{equation*}
The regularized problem was discretized by the finite difference
method using a uniform Eulerian grid with grid size $h$. Then, he
succeeded in deriving the maximum norm error estimate for the velocity of the form
\[
 \|u-\tilde{u}_h\|_{L^\infty(U)}\le C(h+h^\alpha)|\log h|\quad
 (\alpha>0\mbox{ suitable constant})
\]
under regularity assumptions on $\Gamma$ and $F$ together with
structural assumptions on $\delta^h$. Herein, $\tilde{u}_h$ denotes the finite difference
solution. After that, the
method and results were extended to several directions (see
\cite{lm12,lm14}). For example, several $L^p$-error estimates, $1\le
p\le\infty$, were obtained in \cite{lm14}. A typical result is given as  
\[
 \|u-\tilde{u}_h\|_{L^p(U)}+ h\|q-\tilde{q}_h\|_{L^p(U)}\le
 Ch^2|\log h|^\eta\quad 
 (\eta>0\mbox{ suitable constant}).
\]
Similar results for the Poisson interface
problem was presented in \cite{li15}. On the other hand, we observe from
numerical experiments that the IB method has a first order accuracy for the velocity in the
$L^\infty$ norm. Therefore, those estimates
are only sub-optimal and the proof of optimal-order error estimate is
still open at present. Moreover, the explicit formula of the Green function
associated with \eqref{eq:mori1} was used to derive error estimates in
\cite{lm12,lm14,mor08}. Hence, it is difficult to apply those methods to
more standard settings, for example, to the Dirichlet boundary value problem.

 \begin{figure}[ht]
  \begin{center}
   \includegraphics[width=.4\textwidth]{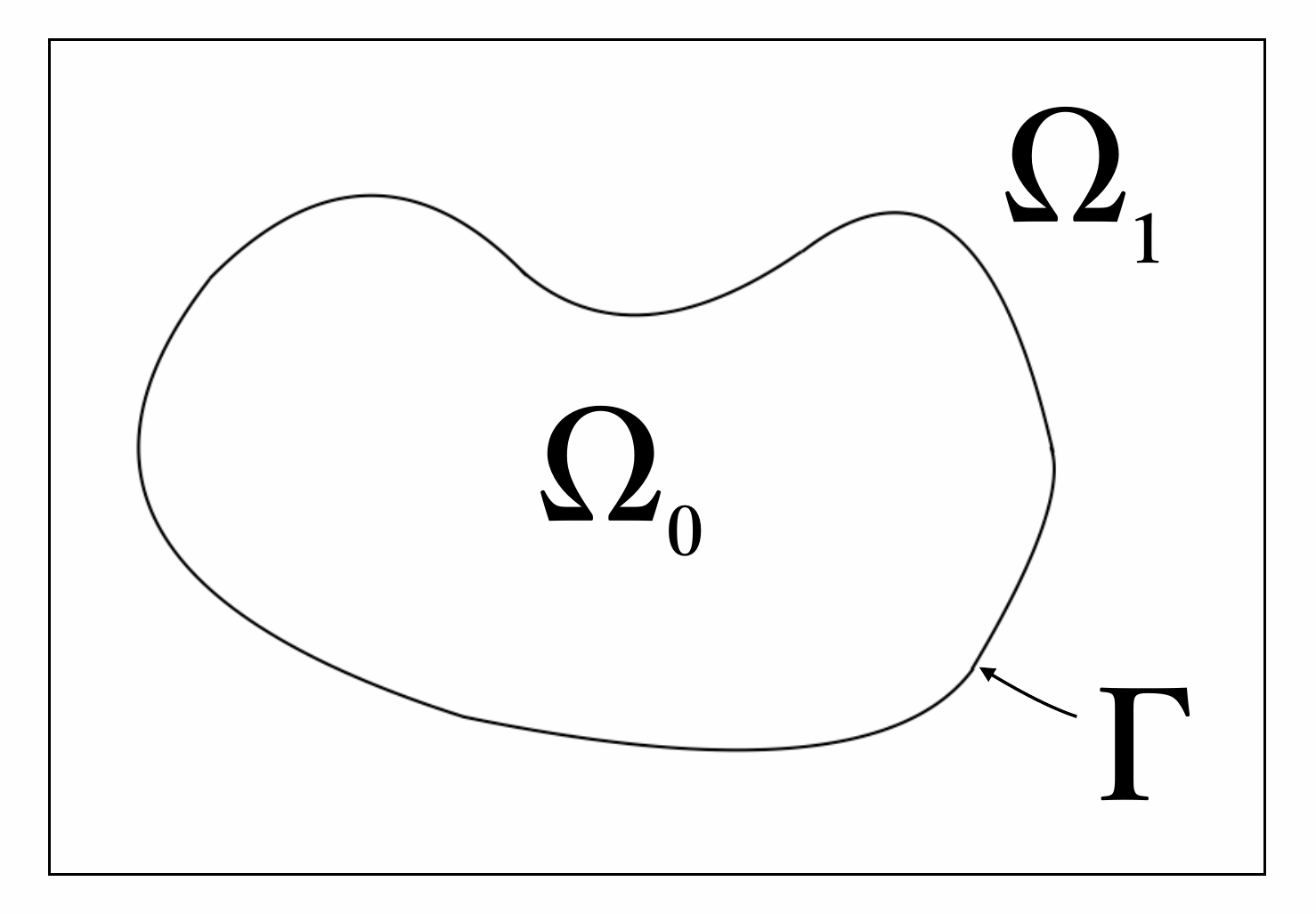}
 \end{center}
\caption{$\overline{\Omega}=\overline{\Omega_0\cup\Omega_1}$, $\Omega_0$, $\Omega_1$ and $\Gamma$.}
\label{fig:domain0}
 \end{figure}

In this paper, we take a different approach. We consider the
Dirichlet boundary value problem for the Stokes equations \eqref{eq:1}
below and study the
\emph{regularization error and discretized error
separately} in Sections \ref{sec:2} and \ref{sec:6}. To this end, we first give interpretations of the
immersed outer force $f$ above as an $\mathbb{R}^n$-Lebesgue measure and as a
functional over $W^{1,p}_0(\Omega)^n$; see
Propositions \ref{lemma:f1} and \ref{la:f-int}. (The
meaning of mathematical symbols will be mentioned in Paragraph \ref{sec:2.0}.) 
Then, we introduce a regularized delta function
$\delta^\ep$ with a parameter $\ep>0$ and examine the error between $f$ and its regularization
\[
 f^\ep(x)= \int_{\Xi} F(\theta)\delta^\ep(x-X(\theta))~d\theta
\]
in the $W^{-1,p}(\Omega)^n$ norm for $1\le p<\frac{n}{n-1}$; see Proposition
\ref{la:reg-err}. Estimate for the
regularization error (see Proposition \ref{la:8}) is a direct consequence of Proposition
\ref{la:reg-err} and the stability result of \cite{mr07} (or (A1$_p$)
below). After introducing structural assumptions on $\delta^\ep$,
\[
 \delta^\ep (x) = \frac1{\ep^n} \prod_{i=1}^n \phi\left(\frac{x_i}\ep\right),
\]
that is
essentially the same as that of \cite{lm12,lm14,mor08}, we show that the
$W^{1,p}\times L^p$ error estimate for the velocity and pressure is of
order $\ep^{1-n+\frac{n}{p}}$ if $1\le p<\frac{n}{n-1}$; see Proposition
\ref{la:err0}.

Then, we proceed to the study of discretization in Section \ref{sec:6}
. We are concerned with the finite
element method rather than the finite difference method. This enable us
to apply several sharp $W^{1,p}\times L^p$ stability
and error estimates due to \cite{gns15} (or (A2$_p$) below). Finally, we
obtain several (still sub-optimal but nearly-optimal) error estimates in
several norms; see Theorem \ref{thm:error} which is the main result of
this paper. The effect of numerical integration for computing $f^\ep$ is
discussed in Section \ref{sec:7}.
Actually, a simple numerical integration formula does not spoil the
accuracy of the IB method (see Proposition \ref{la:reg-err2} and Theorem \ref{thm:error1}).
The validity of those theoretical results
are confirmed by numerical examples in Section \ref{sec:8}. 

We only assume that 
$\phi$ is a continuous function in $\mathbb{R}$ with compact support and
with the unit mean value (see \eqref{eq:del}). 
{On the other hand, several conditions on moment and smoothing orders of
$\phi$ were assumed in \cite{lm12,lm14,mor08}; we are able to remove those
restrictions.   }

It should be kept in mind that our aim is to reveal the accuracy of the
regularization and discretization procedures and is not to propose a new
computational method; see also Remark \ref{rem:15}. We consider the
finite element method only as a model discretization method.

\section{Immersed boundary formulation}
\label{sec:2}

\subsection{Geometry and notation}
\label{sec:2.0}

Suppose that $\Omega$ is a polyhedral domain in $\mathbb{R}^n$, 
$n=2,3$, with the boundary $\partial\Omega$.
The domain $\Omega$ is
divided into two disjoint components $\Omega_0$ and $\Omega_1$ by a
simple closed curve ($n=2$) or surface ($n=3$) which is designated by
$\Gamma$. The curve (surface) $\Gamma$ is called
the \emph{immersed boundary} and is supposed to be parametrized as $\Gamma=\{{X}(\theta)=(X_1(\theta),\ldots,X_n(\theta))\mid \theta\in\Theta\}$ where
$\Theta$ is a bounded subset of $\mathbb{R}^{n-1}$ for the Lagrangian coordinate. See Fig. \ref{fig:domain0} for example.
We set 
\[
 J_{{X}}(\theta)=
  \begin{cases}
   \sqrt{\left|\frac{\partial{X}_1}{\partial \theta}\right|^2+\left|\frac{\partial{X}_2}{\partial \theta}\right|^2} &
   \mbox{if }n=2,\\
   \sqrt{
   \left|\frac{\partial({X}_2,{X}_3)}{\partial
   (\theta_1,\theta_2)}\right|^2
+\left|\frac{\partial({X}_3,{X}_1)}{\partial
   (\theta_1,\theta_2)}\right|^2
+\left|\frac{\partial({X}_1,{X}_2)}{\partial
   (\theta_1,\theta_2)}\right|^2}&  \mbox{if }n=3.
  \end{cases}
\]
Throughout this paper, we assume the following:  
 \begin{itemize}
  \item $\Gamma$ is a $C^1$ boundary ($X(\theta)$ is
	     a $C^1$ function); 
  \item $\operatorname{dist}(\Gamma,\partial\Omega)>0;$
  \item $J_{{X}}(\theta)\ne 0$ \  $(\theta\in\Theta)$.
 \end{itemize}

 We collect here the notation used in this paper.
 We follow the notation of \cite{af03} for function spaces and their
 norms. For a function space $X$, the
space $X^n$
stands for a product space $X\times \cdots \times X$. For abbreviations, we write as,
for example, 
\[
 \|u\|_{W^{1,p}}= \|u\|_{W^{1,p}(\Omega)^n},\qquad \|\pi\|_{L^p}=\|\pi\|_{L^p(\Omega)}.
\]
We set $W^{1,p}_0(\Omega)=\{v\in W^{1,p}(\Omega)\mid v|_{\partial\Omega}=0\}$ and $W^{-1,p}(\Omega)$ the
topological dual of $W^{1,p}_0(\Omega)$. 
The dual product between
$W^{-1,p}(\Omega)^n$ and $W_0^{1,p}(\Omega)^n$ is denoted by
$\dual{\cdot,\cdot}_{W^{-1,p},W_0^{1,p}}$.
We let $L^p_0(\Omega)=\{q\in L^p(\Omega)\mid \int_\Omega q~dx=0\}$. 
Set 
$B(a,r)=\{x\in\mathbb{R}^n\mid |x-a|<r\}$ for $a\in\mathbb{R}^n$ and $r>0$. 

For $1\le p\le \infty$, let $p'$ be the conjugate exponent of $p$; $1\le
p'\le
\infty$ and $\frac{1}{p}+\frac{1}{p'}=1$. 

For vectors $a=(a_1,\ldots,a_n),~b=(b_1,\ldots,b_n)\in\mathbb{R}^n$,
let us denote by $a\cdot b=a_1b_1+\cdots+a_nb_n$ the scalar product. 

\subsection{Immersed boundary force}
\label{sec:2.2}

We set ({formally at this stage}) the immersed boundary force field 
$f:\Omega\to\mathbb{R}^n$ as 
   \begin{equation}
    \label{eq:f0}
     f = \int_\Theta  {F}(\theta) \delta_{{X}(\theta)} ~d\theta 
   \end{equation}
   for $F\in L^1(\Theta)^n$. Hereinafter, we set
   $\delta_a(x)=\delta(x-a)$ for $a\in{\mathbb R}^n$. We have (still {formally})
   \begin{equation}
    \label{eq:f-int0}
     \int_\Omega f(x)\cdot \varphi(x)~dx
     = \int_\Theta F(\theta) \cdot \varphi({X}(\theta))~d\theta 
\qquad (\varphi\in C_0^\infty(\Omega)^n).
   \end{equation}

We state two interpretations of \eqref{eq:f-int0}.  

\begin{prop}
 \label{lemma:f1}
 Let $F\in L^1(\Theta)^n$. 
Then, $f$ defined as \eqref{eq:f0} is 
a finitely signed measure on $\Omega$, with which the integration is defined for any (vector-valued)
measurable function $\varphi$ on $\Omega$. 
In particular, if $F\in L^p(\Theta)^n$ for $1\le p\le \infty$, the integrant is given by  
\begin{equation*}
\dual{f,\varphi}=\int_\Omega \varphi~df = \int_\Theta F(\theta) \cdot \varphi({X}(\theta))~d\theta
\end{equation*}
for any $\varphi\in W^{1,p'}(\Omega)^n$.  
Moreover, $f$ is a singular measure against the Lebesgue measure on
 ${\Omega}$ and, consequently, $f\notin L^1(\Omega)^n$. 
\end{prop}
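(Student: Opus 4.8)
The plan is to realize $f$ as the pushforward (image measure) of an $L^1$ density on $\Theta$ under the parametrization $X$, and then to read off every assertion from standard properties of image measures together with the Sobolev trace theorem on the interior hypersurface $\Gamma$.

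First I would treat each Cartesian component separately. For $i=1,\ldots,n$, the assignment $\mu_i = X_{\#}(F_i\,d\theta)$, that is $\mu_i(A)=\int_{X^{-1}(A)}F_i(\theta)\,d\theta$ for Borel $A\subseteq\Omega$, defines a finite signed Borel measure whose total variation is bounded by $\|F_i\|_{L^1(\Theta)}$, because $F_i\,d\theta$ is a finite signed measure on $\Theta$ and the pushforward of a finite signed measure under a Borel map is again finite and signed. Setting $f=(\mu_1,\ldots,\mu_n)$ yields the claimed finitely signed vector measure, and the abstract change-of-variables formula for image measures gives $\int_\Omega\varphi\,df=\int_\Theta F(\theta)\cdot\varphi(X(\theta))\,d\theta$ for every bounded Borel $\varphi$ (and more generally whenever $\varphi\circ X$ is $|F|\,d\theta$-integrable); this already matches \eqref{eq:f-int0} for $\varphi\in C_0^\infty(\Omega)^n$.

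The substantive step is the identity for $\varphi\in W^{1,p'}(\Omega)^n$ when $F\in L^p(\Theta)^n$, where $\varphi$ has no pointwise values on the Lebesgue-null set $\Gamma$ and must be interpreted through its trace. Since $\Gamma$ is a compact $C^1$ hypersurface with $\operatorname{dist}(\Gamma,\partial\Omega)>0$, it is an interior interface between $\Omega_0$ and $\Omega_1$, and the Sobolev trace theorem applied in each subdomain furnishes a bounded trace operator $W^{1,p'}(\Omega)^n\to L^{p'}(\Gamma)^n$. Pulling back to the parameter domain through the surface-measure identity $d\sigma=J_X(\theta)\,d\theta$, and using that $J_X$ is continuous and nonvanishing on the compact $\Theta$ so that $0<c\le J_X\le C$, I obtain $\|\varphi\circ X\|_{L^{p'}(\Theta)}\le C\|\varphi\|_{W^{1,p'}}$; Hölder's inequality then makes $F\cdot(\varphi\circ X)$ integrable and shows that the right-hand side is a bounded linear functional of $\varphi$ on $W^{1,p'}(\Omega)^n$. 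For $1<p\le\infty$ (so that $1\le p'<\infty$) the space $C^\infty(\overline\Omega)^n$ is dense in $W^{1,p'}(\Omega)^n$, and since both sides of the identity are continuous in the $W^{1,p'}$ norm while agreeing on smooth functions by the previous paragraph, the identity extends to all of $W^{1,p'}(\Omega)^n$; the endpoint $p=1$, $p'=\infty$ is immediate because a $W^{1,\infty}$ function is represented by a continuous one and the pushforward formula applies directly.

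Finally, for the singularity I would use that $f$ is concentrated on $\Gamma$, i.e.\ $|f|(\Omega\setminus\Gamma)=0$, whereas a $C^1$ hypersurface has $n$-dimensional Lebesgue measure zero; hence $f$ and the Lebesgue measure on $\Omega$ are mutually singular, and a nonzero measure singular with respect to Lebesgue measure admits no $L^1$ density, so $f\notin L^1(\Omega)^n$ (the trivial case $F\equiv0$ being excluded). The main obstacle I anticipate is precisely the middle paragraph: giving rigorous meaning to $\varphi(X(\theta))$ for a merely $W^{1,p'}$ function and controlling it by $\|\varphi\|_{W^{1,p'}}$, which forces the trace theorem on the interior hypersurface together with the two-sided bound on $J_X$; once this trace estimate is secured, the density and continuity extension is routine.
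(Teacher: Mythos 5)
Your proposal is correct and follows essentially the same route as the paper: the paper checks directly (via dominated convergence) that $B\mapsto\int_\Theta F(\theta)\delta_{X(\theta)}(B)\,d\theta=\int_{X^{-1}(B)}F(\theta)\,d\theta$ is a finite signed measure --- which is precisely your pushforward of $F\,d\theta$ under $X$ --- and then obtains singularity from $m(\Gamma)=0$ together with $f(\Gamma)\neq 0$ and the Lebesgue decomposition theorem, exactly as you do. Your middle paragraph (interior trace theorem, the two-sided bound on $J_X$, density of smooth functions) is actually more explicit than the paper's proof, which merely notes that the right-hand side is meaningful for $F\in L^p(\Theta)^n$ and $\varphi\in W^{1,p'}(\Omega)^n$ and carries out the trace estimate only in Proposition \ref{la:f-int}.
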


\begin{proof}
We identify $\delta_a(x)=\delta(x-a)$ with the Dirac measure concentrated at $a\in {\mathbb R}^n$. 
Then, for any measurable set $B\subset{\mathbb R}^n$ and $\theta\in\Theta$, we have
\begin{equation*}
 \delta_{X(\theta)} (B) 
= 1_{X^{-1}(B)} (\theta)
=
\begin{cases}
1 &(X(\theta)\in B)\\
0 &(X(\theta)\notin B),
\end{cases}
\end{equation*}
where $1_{X^{-1}(B)}$ denotes the indicator function of $X^{-1}(B)$ on $\Theta$. 
By virtue of Lebesgue's dominated convergence theorem, we derive for any disjoint measurable sets $\{B_n\}_n$ 
\begin{align*}
f\left(\bigcup_{n=1}^\infty B_n\right) 
&=
\int_\Theta F(\theta) \sum_{n=1}^\infty\delta_{{X}(\theta)}(B_n)~d\theta 
= \int_\Theta F(\theta) \sum_{n=1}^\infty 1_{{X}^{-1}(B_n)} (\theta) ~d\theta \\
&= \sum_{n=1}^\infty \int_\Theta F(\theta) 1_{{X}^{-1}(B_n)} (\theta) ~d\theta= 
\sum_{n=1}^\infty \int_\Theta F(\theta) \delta_{X(\theta)}(B_n)
 ~d\theta \\
 &= \sum_{n=1}^\infty f(B_n). 
\end{align*}
Herein, note that $F(\theta)\sum_{n=1}^N 1_{{X}^{-1}(B_n)} (\theta)$ is integrable for any 
$N\in {\mathbb N}$ since $F\in L^1(\Theta)^n$ and $B_n$ is disjoint.
It follows $f(\emptyset)=0$ from $\delta_a(\emptyset)=0$ for all 
$a\in {\mathbb R}^n$. Thus, $f$ is a finitely signed measure on $\Omega$ so that 
the integral $\int_\Omega \varphi~df$ is well-defined for all measurable function $\varphi$. 
According to an integral with the Dirac measure, we have
\begin{equation*}
 \int_\Omega \varphi~df = \int_\Theta F(\theta) \cdot \varphi({X}(\theta))~d\theta,
\end{equation*}
where the right hand side is meaningful for $F\in L^p(\Theta)^n$ and $\varphi \in W^{1,p'}(\Omega)^n$. 
Although the ${\mathbb R}^n$-Lebesgue measure 
$m(\Gamma)$ of $\Gamma$ vanishes (note that $\Gamma$ is ``very thin''), we have $f
 (\Gamma) \neq 0$. Hence, 
$f$ is singular against $m$. 
Finally, the fact $f\notin L^1(\Omega)$ follows from the Lebesgue 
decomposition theorem. 
\end{proof}

Although $f$ does not belong to any $L^p(\Omega)$ spaces as is mentioned
in Proposition \ref{lemma:f1}, it is well-defined as
a functional on $W^{1,p}(\Omega)^n$. 

\begin{prop}
\label{la:f-int}
 Let $1\le p <\infty$ and $F\in L^p(\Theta)^n$.
 Then, the functional
\begin{equation*}
\dual{f,\varphi} = \int_\Theta F(\theta) \cdot \varphi({X}(\theta))~d\theta 
 \quad (\varphi\in C_0^\infty(\Omega)^n)
\end{equation*}
is extended by continuity to a bounded linear functional on $W^{1,p}_0(\Omega)^n$, which
 will be denoted by $\dual{f,\cdot}_{W^{-1,p},W_0^{1,p}}$ below. That
 is, we have $f\in W^{-1, p}(\Omega)^n$.
\end{prop}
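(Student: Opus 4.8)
The plan is to establish the continuity bound $|\langle f,\varphi\rangle|\le C\|F\|_{L^p(\Theta)^n}\|\varphi\|_{W^{1,p'}(\Omega)^n}$ for every $\varphi\in C_0^\infty(\Omega)^n$ and then to extend $\langle f,\cdot\rangle$ by continuity, using that $C_0^\infty(\Omega)^n$ is dense in $W^{1,p'}_0(\Omega)^n$ (the conjugate exponent $p'$ being the one that governs the test functions paired with $W^{-1,p}$). The one substantial point is to trade the pointwise restriction $\varphi\mapsto\varphi\circ X$, which a priori only makes sense for continuous $\varphi$, for the trace of $\varphi$ on $\Gamma$, whose size is controlled by $\|\varphi\|_{W^{1,p'}(\Omega)^n}$.

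First I would peel off the datum by H\"older's inequality on $\Theta$ with exponents $p$ and $p'$:
\[
\left|\int_\Theta F(\theta)\cdot\varphi(X(\theta))\,d\theta\right|\le\|F\|_{L^p(\Theta)^n}\left(\int_\Theta|\varphi(X(\theta))|^{p'}\,d\theta\right)^{1/p'}.
\]
Since $F\in L^p(\Theta)^n$ by hypothesis, the first factor is finite and fixed, and it remains to control the second factor.

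Next I would rewrite the parameter integral as a surface integral over $\Gamma$. Using the area element $d\sigma=J_X\,d\theta$ and the essential injectivity of the simple parametrization $X$,
\[
\int_\Theta|\varphi(X(\theta))|^{p'}\,d\theta=\int_\Gamma|\varphi(x)|^{p'}\frac{d\sigma(x)}{J_X}\le\left(\sup_\Theta\frac{1}{J_X}\right)\|\varphi\|_{L^{p'}(\Gamma)^n}^{p'}.
\]
The standing hypotheses $X\in C^1$, $\Theta$ bounded, and $J_X\ne0$ are exactly what is needed here: they force $J_X$ to be bounded away from $0$, so that $1/J_X$ is bounded and the change of variables is legitimate. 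Finally, since $\Gamma$ is a $C^1$ hypersurface lying at positive distance from $\partial\Omega$ and bounding the interior subdomain $\Omega_1$, the trace theorem on $\Omega_1$ yields
\[
\|\varphi\|_{L^{p'}(\Gamma)^n}\le C\|\varphi\|_{W^{1,p'}(\Omega_1)^n}\le C\|\varphi\|_{W^{1,p'}(\Omega)^n}.
\]
Chaining the three estimates gives the asserted bound for $\varphi\in C_0^\infty(\Omega)^n$, and density then produces the bounded linear functional $f\in W^{-1,p}(\Omega)^n$.

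I expect the trace inequality to be the only genuine obstacle, because it is there that the admissible integrability is decided; what makes the full range $1\le p<\infty$ work is precisely that the trace embedding of $W^{1,p'}(\Omega_1)$ into $L^{p'}(\Gamma)$ holds on the $C^1$ surface $\Gamma$ for every conjugate exponent $p'\in(1,\infty]$. The H\"older estimate and the change of variables are routine once the geometric assumptions and the lower bound on $J_X$ are in hand. The borderline case $p=1$ (where $p'=\infty$ and $C_0^\infty$ is no longer dense in $W^{1,p'}_0$) is already covered by the measure interpretation of Proposition \ref{lemma:f1}, since there $\varphi\in W^{1,\infty}\subset C^0(\overline\Omega)$ may be paired directly with the finite measure $f$.
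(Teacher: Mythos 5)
Your proof is correct and follows essentially the same route as the paper's: H\"older's inequality on $\Theta$, conversion of the parameter integral to a surface integral over $\Gamma$ via the Jacobian $J_X$ (bounded away from zero by the standing assumptions), and the trace theorem $W^{1,p'}(\Omega)\hookrightarrow L^{p'}(\Gamma)$. Your explicit treatment of the borderline case $p=1$ (where $p'=\infty$ and the density argument fails, so one falls back on the measure interpretation of Proposition \ref{lemma:f1}) is a point the paper passes over silently, but it does not change the substance of the argument.
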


\begin{proof}
Let $\varphi\in C_0^\infty(\Omega)^n$. Since
\begin{equation*}
\int_\Gamma \mid\varphi |_\Gamma\mid^{p'}~d\Gamma 
= \int_\Theta \mid \varphi({X}(\theta))\mid^{p'} |J_X(\theta)| ~d\theta,
\end{equation*}
we have by the trace theorem
\begin{align*}
 \dual{f,\varphi}
&\le \|F\|_{L^p(\Theta)} \left( 
\int_\Theta \mid \varphi({X }(\theta))\mid^{p'}~d\theta\right)^\frac1{p'} \\
&\le \|F\|_{L^p(\Theta)}\|J_X\|_{L^\infty(\Theta)}^{-\frac1{p'}} 
\|\varphi\|_{L^{p'}(\Gamma)} 
\le  C \|F\|_{L^p(\Theta)}\|\varphi\|_{W^{1,{p'}}(\Omega)}.
\end{align*}
\end{proof}

Let $\ep>0$ be a regularized parameter. Take a continuous function
$\delta^\ep=\delta^\ep(x)$ satisfying
\begin{equation}
 \label{eq:dep}
  \operatorname{supp}\delta^\ep \subset B(0,K\ep)
\end{equation}
with $K>0$.

Setting $\delta_a^\ep(x)=\delta^\ep(x-a)$ for $a\in\mathbb{R}^n$, we introduce the \emph{regularized
immersed force field} as 
\begin{equation}
 \label{eq:fe}
 f^\ep = \int_\Theta   F(\theta) \delta^\ep_{{X}(\theta)} ~d\theta .
\end{equation}

Since $\delta^\ep_a\in L^\infty(\Omega)$, we have $f^\ep\in
L^\infty(\Omega)$ for $F\in L^1(\Theta)$. The following result plays the most crucial role in this study. 

 \begin{prop}
  \label{la:reg-err}
Suppose that we are given a continuous function $\delta^\ep$ satisfying
  \eqref{eq:dep}. Then, for $1\le p<\frac{n}{n-1}$ and $F\in L^p(\Theta)$, we have
\begin{equation*}
\| f-f^\ep \|_{W^{-1,p}} \le C_0 \|F\|_{L^p(\Theta)}
\left[ \left| 1- \int_{{\mathbb R}^n} \delta^\ep (y) ~dy \right|
+  \| \rho \delta^\ep\|_{L^p({\mathbb R}^n)} 
 \right],
\end{equation*}
where $\rho(x)=x$ and $C_0$ denotes a positive constant depending only
  on $n$, $p$ and $\|J_X\|_{L^\infty(\Theta)}$. 
 \end{prop}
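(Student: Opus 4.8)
The plan is to test the functional $f-f^\ep$ against $\varphi\in C_0^\infty(\Omega)^n$, which is dense in $W^{1,p'}_0(\Omega)^n$ (with $p'$ the conjugate of $p$), estimate $\dual{f-f^\ep,\varphi}$ by the right-hand side times $\|\varphi\|_{W^{1,p'}}$, and then take the supremum over $\|\varphi\|_{W^{1,p'}}\le 1$ to recover the $W^{-1,p}$ norm by duality. Using Proposition \ref{la:f-int} for $\dual{f,\varphi}$, and Fubini's theorem together with \eqref{eq:fe} for $\dual{f^\ep,\varphi}=\int_\Omega f^\ep\cdot\varphi\,dx$ (extending $\varphi$ by zero outside $\Omega$, so that $\int_\Omega$ becomes $\int_{\mathbb R^n}$ and the shift against $\delta^\ep$ causes no boundary difficulty), I would first arrive at
\[
\dual{f-f^\ep,\varphi}=\int_\Theta F(\theta)\cdot\Big[\varphi(X(\theta))-\int_{\mathbb R^n}\delta^\ep(y)\,\varphi(X(\theta)+y)\,dy\Big]\,d\theta .
\]
Writing $m_\ep=\int_{\mathbb R^n}\delta^\ep$, I would split the bracket into a \emph{mass-defect} part $(1-m_\ep)\varphi(X(\theta))$ and a \emph{smoothing} part $\int_{\mathbb R^n}\delta^\ep(y)[\varphi(X(\theta))-\varphi(X(\theta)+y)]\,dy$.

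The mass-defect part contributes $(1-m_\ep)\dual{f,\varphi}$, and the trace bound established in the proof of Proposition \ref{la:f-int} controls it by $|1-m_\ep|\,C\|F\|_{L^p(\Theta)}\|\varphi\|_{W^{1,p'}}$. This accounts for the term $|1-\int_{\mathbb R^n}\delta^\ep|$ in the statement.

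The smoothing part is the crux. By the fundamental theorem of calculus $\varphi(X(\theta))-\varphi(X(\theta)+y)=-\int_0^1(y\cdot\nabla)\varphi(X(\theta)+sy)\,ds$, so this part is dominated by $\int_{\mathbb R^n}|\delta^\ep(y)|\,|y|\int_0^1\int_\Theta|F(\theta)|\,|\nabla\varphi(X(\theta)+sy)|\,d\theta\,ds\,dy$. The naive continuation—Hölder in $\theta$ and then the trace theorem on each shifted surface $\Gamma+sy$—fails, since $\nabla\varphi\in L^{p'}$ has no trace. The key idea is instead to exploit the integration over the homotopy parameter $s$. After Hölder in $\theta$ (producing $\|F\|_{L^p(\Theta)}$ and $(\int_\Theta|\nabla\varphi(X(\theta)+sy)|^{p'}d\theta)^{1/p'}$) and then Hölder in $y$ (extracting $\|\rho\delta^\ep\|_{L^p}$), the surviving factor $(\int_{\mathbb R^n}\int_\Theta|\nabla\varphi(X(\theta)+sy)|^{p'}\,d\theta\,dy)^{1/p'}$ becomes, via the change of variables $z=X(\theta)+sy$ with Jacobian $s^n$, a genuine volume integral equal to $|\Theta|^{1/p'}s^{-n/p'}\|\nabla\varphi\|_{L^{p'}}$. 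Hence the smoothing part is bounded by $C\|F\|_{L^p(\Theta)}\|\rho\delta^\ep\|_{L^p}\|\nabla\varphi\|_{L^{p'}}\int_0^1 s^{-n/p'}\,ds$.

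The hard part is exactly this last integral: $\int_0^1 s^{-n/p'}\,ds$ is finite if and only if $n/p'<1$, i.e. $p<\tfrac{n}{n-1}$, in which case it equals $(1-n/p')^{-1}$. This integrability threshold is precisely where the dimensional restriction on $p$ enters, and it is the heart of the proposition. Collecting the two contributions and passing to the supremum over $\varphi$ (together with the density of $C_0^\infty$) yields the asserted estimate, with $C_0$ depending only on $n$, $p$ (through $(1-n/p')^{-1}$ and the trace constant) and $\|J_X\|_{L^\infty(\Theta)}$. The endpoint $p=1$ ($p'=\infty$) is handled by the same splitting with the $L^{p'}$-trace factor replaced by an $L^\infty$ bound, the $s$-integral then being trivially finite.
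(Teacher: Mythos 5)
Your proposal is correct and follows essentially the same route as the paper's proof: the same splitting of $\dual{f-f^\ep,\varphi}$ into a mass-defect term and a smoothing term, the same first-order Taylor expansion of $\varphi$ along the segment from $X(\theta)$, and the same change of variables with Jacobian $s^n$ leading to the integrability condition $\int_0^1 s^{-n/p'}\,ds<\infty$, which is exactly where the restriction $p<\tfrac{n}{n-1}$ enters. The only immaterial difference is the order of the H\"older applications in the smoothing term (the paper applies H\"older in $x$ first and ends with $\|F\|_{L^1(\Theta)}$, whereas you apply H\"older in $\theta$ and then in $y$).
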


  \begin{proof}
  Let $\varphi \in C^\infty_0(\Omega)^n$ and express it as
  \[
  \varphi(x) = \varphi({X}(\theta)) + (x-{X}(\theta))\cdot 
  \int_0^1 \nabla \varphi( t( x-{X}(\theta) ) + {X}(\theta) ) ~dt
  \quad (x\in\mathbb{R}^n).
  \]
Then, applying Fubini's lemma, we have 
\begin{multline*}
  \dual{f-f^\ep, \varphi}
=\underbrace{\int_\Theta F(\theta) \varphi(X(\theta))
\left( 1- \int_\Omega \delta_{X(\theta)}^\ep (x) ~dx \right) ~d\theta}_{=I_1}  \\
\underbrace{-\int_0^1\int_\Theta F(\theta) \int_\Omega  \delta_{X(\theta)}^\ep (x)
(x-X(\theta))\cdot\nabla \varphi( t( x-X(\theta) ) + X(\theta) ) ~dxd\theta dt}_{=I_2}. 
\end{multline*}

For a sufficiently small $\ep$, we have  $B(X(\theta), K\ep)\subset \Omega$ and 
\[
 \int_\Omega \delta_{X(\theta)}^\ep ~dx = 
\int_{B(X(\theta), K\ep)} \delta_{X(\theta)}^\ep (x)~dx = 
\int_{B(0, K\ep)} \delta^\ep(y) ~dy = 
\int_{{\mathbb R }^n} \delta^\ep(y) ~dy  .
\] 
Hence,  
\begin{align*}
|I_1|&\le \left| 1- \int_{{\mathbb R}^n} \delta^\ep (y) ~dy \right|\int_\Theta |F(\theta)|\cdot |\varphi(X(\theta))|~d\theta \\
&\le \|F\|_{L^p(\Theta)} \left( \int_\Theta |\varphi(X(\theta))|^{p'} ~d\theta \right)^\frac1{p'}
\left| 1- \int_{{\mathbb R}^n} \delta^\ep (y) ~dy \right| \\
&\le \frac{ \|F\|_{L^p(\Theta)}}{\|J_X\|_{L^\infty(\Theta)}^{1/p'}} \|\varphi\|_{L^{p'}(\Gamma)} 
\left| 1- \int_{{\mathbb R}^n} \delta^\ep (y) ~dy \right| \\
&\le C \|F\|_{L^p(\Theta)}\left| 1- \int_{{\mathbb R}^n} \delta^\ep (y) ~dy \right| 
\|\varphi\|_{W^{1,{p'}}(\Omega)}.
\end{align*}

By virtue of H\"older's inequality, we have
\begin{align*}
|I_2|
 &\le \int_0^1\int_\Theta |F(\theta)|\cdot  
 \|  (x-X(\theta)) \delta_{X(\theta)}^\ep \|_{L^p(\Omega)}\cdot \\
 &\mbox{ }\qquad \cdot
\left[ \int_\Omega |\nabla \varphi( t( x-X(\theta) ) + X(\theta) ) |^{p'} ~dx \right]^\frac1{p'}
 ~d\theta dt \\
 &\le \| \rho\delta^\ep\|_{L^p({\mathbb R}^n)} \int_0^1\int_\Theta |F(\theta)| 
 \left[ \int_{{\mathbb R}^n} |\nabla \tilde{\varphi}( t( x-X(\theta) ) + X(\theta) ) |^{p'} ~dx
 \right] ^\frac1{p'} ~d\theta \\
 &\le \| \rho\delta^\ep \|_{L^p({\mathbb R}^n)} \int_0^1\int_\Theta |F(\theta)| ~d\theta
 \left[ \frac1{t^n} \int_{{\mathbb R}^n} |\nabla \tilde{\varphi}(z) |^{p'} ~dz \right]^\frac1{p'} \\
 &\le \| \rho\delta^\ep \|_{L^p({\mathbb R}^n)} \|F\|_{L^1(\Theta)}
 \left(\int_0^1 t^{-\frac n{p'}}~dt\right)
 \|\tilde{\varphi}\|_{W^{1,{p'}}({\mathbb R}^n)}\\
 &\le \frac{p'}{p'-n}\|F\|_{L^1(\Theta)} \| \rho\delta^\ep \|_{L^p({\mathbb R}^n)}\|{\varphi}\|_{W^{1,{p'}}},
\end{align*}
where $\tilde\varphi$ denotes the zero extension of $\varphi$ into
  $\mathbb{R}^n$ and $z = t( x-X(\theta) ) + X(\theta)$. 
   (Note that $n<p'\le \infty$ by $1\le p<\frac{n}{n-1}$.)
  \end{proof}

\begin{remark}
We take $\varphi_0 \in C^\infty_0(\Omega)$ satisfying 
\begin{equation*}
\varphi_0(x)=1 \text{ if }x \in \Gamma(\ep) 
= \{x\in\Omega\mid\text{dist}(x, \Gamma) < \ep \} \cup \Omega_0. 
\end{equation*}
Then, $I_2$ in the proof above vanishes and  
\[
 \|f-f^\ep\|_{W^{-1,p}}\ge\frac{\dual{f-f^\ep, \varphi_0}}{\|\varphi_0\|_{W^{1, p'}}} 
= \frac1{\|\varphi_0\|_{W^{1,p'}}}\int_\Theta F(\theta) ~d\theta
\left[ 1- \int_{{\mathbb R}^n} \delta^\ep (x) ~dx \right] .
\]
 Hence,
 \[
   \int_{{\mathbb R}^n} \delta^\ep (x) ~dx\to 1\quad (\ep\to 0)
 \]
 is a necessary condition for 
 $\|f-f^\ep\|_{W^{-1,p}}\to 0$ to hold.
\label{rem:2.7} 
\end{remark}

  \subsection{Target and regularized problems}
\label{sec:2.3}

We proceed to the formulation of the immersed boundary method.
Using $f$ and $f^\ep$ defined by \eqref{eq:f0} and \eqref{eq:fe},
we consider, respectively, the immersed boundary formulation to the
Stokes equations for the velocity $u$ and pressure $\pi$, 
\begin{equation}
\label{eq:1}
 -\nu \Delta u +\nabla \pi  = f \mbox{ in }\Omega, \quad
 \nabla \cdot u =0 \mbox{ in } \Omega,
 \quad u=0 \mbox{ on }  \partial \Omega
\end{equation}
and its regularized problem for $u^\ep$ and $\pi^\ep$, 
\begin{equation}
\label{eq:1e}
 -\nu \Delta u^\ep +\nabla \pi^\ep  = f^\ep \mbox{ in }\Omega, \quad
 \nabla \cdot u^\ep =0 \mbox{ in } \Omega,
 \quad u^\ep=0 \mbox{ on }  \partial \Omega.
\end{equation}

By a \emph{weak solution} $(u,\pi)\in W_0^{1,p}(\Omega)^n\times
L^p_0(\Omega)$ of \eqref{eq:1} for example, we
mean a solution of the following variational equations: Find $(u,\pi)\in
W_0^{1,p}(\Omega)^n\times L_0^p(\Omega)$ such that  
\begin{subequations}
\label{eq:vp0}
\begin{align}
 & a(u,v) + b(\pi,v) =\dual{f,v}_{W^{-1,p},W_0^{1,p}}
  && (\forall v \in W^{1, p'}_0(\Omega)^n), \label{eq:vp01}\\
&b(q,u) = 0 && (\forall q \in L^{p'}_0(\Omega)), \label{eq:vp02}
\end{align}
\end{subequations}
where
\begin{subequations}
\label{eq:forms}
\begin{align}
a(u,v)&=\frac{\nu}{2}\int_{\Omega} \left( \frac{\partial u_j}{\partial
				   x_i} + \frac{\partial u_i}{\partial
				   x_j} \right)\left( \frac{\partial
				   v_j}{\partial x_i} + \frac{\partial
				   v_i}{\partial x_j} \right)~dx,\label{eq:a}\\ 
b(\pi,u)&=-\int_{\Omega} \pi (\nabla\cdot u)~dx.\label{eq:b}
\end{align}
\end{subequations}

\begin{remark}
Problem \eqref{eq:vp0} can be directly discretized by the finite element
 method with no regularization of $f$. Such methods were studied in
 \cite{bgh07,tab07} for nonstationary Navier-Stokes equations. However,
 our aim here is to reveal the accuracy of the
regularization and discretization procedures as is mentioned in Introduction. 
 \label{rem:15}
\end{remark}

\begin{remark}
 The bilinear form $a$ defined by \eqref{eq:a} is based on the
 deformation-rate tensor $[(1/2)(u_{i,j}+u_{j,i})]_{1\le i,j\le
 n}$. Another definition
 \[
  a(u,v)={\nu}\int_{\Omega} \frac{\partial u_j}{\partial  x_i}\frac{\partial
				   v_j}{\partial x_i}~dx
 \]
 is also available. 
 However, with \eqref{eq:a}, our problem is (essentially) equivalent to
 a two-phase Stokes problem considered in \cite{tab07} for example. 
\end{remark}

We make the following assumption for $1\le p<\infty$:

\smallskip

\noindent \textbf{(A1$_p$)} 
For a given $g\in W^{-1,p}(\Omega)^n$, there exists a unique weak solution $(w,r)\in W^{1,p}_0(\Omega)\times 
L_0^p(\Omega)$ of the Stokes problem, 
\begin{equation}
 \label{eq:w}
  -\nu \Delta w +\nabla r  = g \mbox{ in }\Omega, \quad
 \nabla \cdot w =0 \mbox{ in } \Omega,
 \quad w=0 \mbox{ on }  \partial \Omega
\end{equation}
satisfying 
\begin{equation}
 \label{eq:11z}
\|w\|_{W^{1,p}}+\|r\|_{L^p}\le C_1 \|g\|_{W^{-1,p}}. 
\end{equation}
	    Moreover, if $g\in W^{-1,2}(\Omega)^n\cap
	    L^p(\Omega)$, we have $(w,r)\in W^{2,p}(\Omega)\times 
	    W^{1,p}(\Omega)$ and
	    \begin{equation}
 \label{eq:1a1z}
\|w\|_{W^{2,p}}+\|r\|_{W^{1,p}}\le C_2 \|g\|_{L^p}.
	    \end{equation}
	    Herein, $C_1$ and $C_2$ denote positive constants depending only on
	    $p$ and $\Omega$.

 \begin{remark}
 If $\Omega$ is a convex Lipschitz domain and $1<p\le 2$, (A1$_p$) is
  satisfied in view of \cite[Example 5.5]{mr07} and Lemma
  \ref{la:f-int}. However, we
  directly assume (A1$_p$) instead of the shape condition on
  $\Omega$. Below, $p$ will be restricted as $p<n/(n-1)$.   
 \label{rem:2.10}
 \end{remark}

 The following result is a direct consequence of Lemma \ref{la:reg-err}
 and (A1$_p$).

  \begin{prop}
   \label{la:8}
  Let $1\le p<\frac{n}{n-1}$ and suppose that {\rm (A1$_p$)} is
  satisfied. Let $F\in L^p(\Theta)$. 
Let $(u,\pi)$ and $(u^\ep,\pi^\ep)$ be the weak solutions of \eqref{eq:1}
 and \eqref{eq:1e}, respectively. Then, we have
\begin{equation*}
  \| u-u^\ep \|_{W^{1,p}} + \|\pi-\pi^\ep\|_{L^p} 
  \le C_0C_1\|F\|_{L^p(\Theta)}
\left[ \left| 1- \int_{{\mathbb R}^n} \delta^\ep (y) ~dy \right|
+  \| \rho \delta^\ep\|_{L^p({\mathbb R}^n)} 
 \right]. 
\end{equation*}
  \end{prop}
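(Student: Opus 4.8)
The plan is to exploit the linearity of the Stokes system together with the stability estimate of (A1$_p$), so that the entire bound reduces to the estimate on $\|f-f^\ep\|_{W^{-1,p}}$ already obtained in Proposition~\ref{la:reg-err}. Since $1\le p<\frac{n}{n-1}$ and (A1$_p$) holds, both \eqref{eq:1} and \eqref{eq:1e} are uniquely solvable in $W^{1,p}_0(\Omega)^n\times L^p_0(\Omega)$: the right-hand side $f$ belongs to $W^{-1,p}(\Omega)^n$ by Proposition~\ref{la:f-int}, while $f^\ep\in L^\infty(\Omega)^n\subset W^{-1,p}(\Omega)^n$ because $\Omega$ is bounded. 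Hence $(u,\pi)$ and $(u^\ep,\pi^\ep)$ are well defined and may be compared.

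First I would subtract the two weak formulations \eqref{eq:vp0}. Writing $w=u-u^\ep$ and $r=\pi-\pi^\ep$, the bilinearity of $a$ and $b$ gives, for every test pair $(v,q)\in W^{1,p'}_0(\Omega)^n\times L^{p'}_0(\Omega)$,
\begin{align*}
a(w,v)+b(r,v) &= \dual{f-f^\ep,\,v}_{W^{-1,p},W_0^{1,p}}, \\
b(q,w) &= 0.
\end{align*}
In other words, $(w,r)$ is precisely the unique weak solution of the Stokes problem \eqref{eq:w} with data $g=f-f^\ep\in W^{-1,p}(\Omega)^n$. The uniqueness asserted in (A1$_p$) is what allows this identification.

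Next I would invoke the stability estimate \eqref{eq:11z} with this choice of $g$, which yields
\[
\|u-u^\ep\|_{W^{1,p}}+\|\pi-\pi^\ep\|_{L^p}\le C_1\|f-f^\ep\|_{W^{-1,p}}.
\]
Finally, substituting the bound from Proposition~\ref{la:reg-err} into the right-hand side produces the asserted inequality, with the combined constant $C_0C_1$.

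I expect no serious obstacle here, as the statement is, as advertised, a direct consequence of the two cited results. The only point demanding a moment of care is confirming that the difference of right-hand sides genuinely defines an element of $W^{-1,p}(\Omega)^n$ so that (A1$_p$) is applicable; this is exactly where the membership $f\in W^{-1,p}$ from Proposition~\ref{la:f-int} and $f^\ep\in L^\infty\subset W^{-1,p}$ enter, together with the uniqueness clause of (A1$_p$) needed to identify $(w,r)$ with the solution driven by $f-f^\ep$ rather than with merely some solution.
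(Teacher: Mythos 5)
Your proposal is correct and follows exactly the route the paper intends: the paper itself only remarks that the result ``is a direct consequence of Proposition~\ref{la:reg-err} and (A1$_p$),'' and your argument --- subtracting the weak formulations, identifying $(u-u^\ep,\pi-\pi^\ep)$ via uniqueness as the solution of \eqref{eq:w} with data $g=f-f^\ep$, applying the stability bound \eqref{eq:11z}, and then inserting the estimate of Proposition~\ref{la:reg-err} to obtain the constant $C_0C_1$ --- is precisely the omitted verification.
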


%
The most familiar choice of $\delta^\ep$ is given by a product 
of one variable functions: 
\begin{subequations}
\label{eq:del}
\begin{gather}
\delta^\ep (x) = \frac1{\ep^n}\prod_{i=1}^n  \phi\left(\frac{x_i}\ep\right)
\quad (x=(x_1,\ldots,x_n));\label{eq:dela}\\
\phi\mbox{ is continuous in $\mathbb{R}$},\quad 
 \operatorname{supp}\phi\subset B(0,K\ep),\quad 
 \int_{\mathbb{R}}\phi(s)~ds=1\label{eq:delb}
\end{gather}
\end{subequations}
with $K>0$. In \eqref{eq:dela}, 
the function $(1/\ep) \phi({x_i}/{\ep})$ is an approximation of the
one-dimensional Dirac delta. Then, we can calculate as: 
\begin{subequations}
\label{eq:del2}
\begin{align}
\int_{{\mathbb R}^n} \delta^\ep(y)~dy 
&= 1;\label{eq:del2a}\\
 \int_{{\mathbb R}^n} |y|^p |\delta^\ep(y)|^p~dy& 
\le C_3\ep^{p-pn+n};\label{eq:del2b}\\
\int_{{\mathbb R}^n}  |\delta^\ep(y)|^p~dy & 
\le C_3'\ep^{-pn+n}\label{eq:del2c},
\end{align}
\end{subequations}
where $C_3$ and $C_3'$ denote positive constants depending only on $p$,
$n$, $K$ and $\|\phi\|_{L^\infty(\mathbb{R})}$. For example, if $n=3$,  
\begin{align*}
\int_{{\mathbb R}^n} & |y|^p |\delta^\ep(y)|^p~dy \\
&\le \ep^{p-pn+n} \int_0^{\sqrt n K} \int_0^{2\pi} \int_0^{\pi} 
 s^{p+n-1} |\phi(s\cos\varphi\sin\theta)|^p \cdot \\
 &\mbox{ }\qquad \cdot 
|\phi(s\sin\varphi\sin\theta)|^p |\phi(s\cos\theta)|^p 
\sin\theta~dsd\varphi d\theta \\
&\le \frac{4\pi}{p+n} (\sqrt 3 K)^{p+n} \|\phi\|_{L^\infty(\mathbb{R})}^ {3p} \ep^{p-pn+n}.
\end{align*}
Similarly, we can take  
$C_3 = \frac{2\pi}{p+2} (\sqrt 2 K)^{p+2}
\|\phi\|_{L^\infty(\mathbb{R})}^{2p}$ if $n=2$.  

Therefore, our error estimate for the regularized problem is given as
follows.

  \begin{prop}
   \label{la:err0}
  Let $1\le p<\frac{n}{n-1}$ and suppose that {\rm (A1$_p$)} is satisfied. Let $F\in L^p(\Theta)$. 
Let $(u,p)$ and $(u^\ep,p^\ep)$, respectively, be the weak solutions of \eqref{eq:1}
 and \eqref{eq:1e} with \eqref{eq:del}. Then, we have
\begin{equation}
\label{eq:17}
  \| u-u^\ep \|_{W^{1,p}} + \|p-p^\ep\|_{L^p} 
  \le C \|F\|_{L^p(\Theta)}\ep^{1-n+\frac{n}{p}},
\end{equation}
where $C$ denotes a positive constant depending only on $n$, $p$,
   $\|J_X\|_{L^\infty(\Theta)}$, $K$, $\|\phi\|_{L^\infty(\mathbb{R})}$ and
   $\Omega$. 
  \end{prop}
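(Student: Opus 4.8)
The plan is to read off \eqref{eq:17} as a direct specialization of Proposition \ref{la:8} to the product delta \eqref{eq:del}, using only the two moment computations \eqref{eq:del2a} and \eqref{eq:del2b}. Since the hypotheses of Proposition \ref{la:8} (namely $1\le p<\frac{n}{n-1}$, (A1$_p$), and $F\in L^p(\Theta)$) are exactly those assumed here, that proposition already delivers
\[
\| u-u^\ep \|_{W^{1,p}} + \|\pi-\pi^\ep\|_{L^p} \le C_0 C_1 \|F\|_{L^p(\Theta)} \left[ \left|1-\int_{{\mathbb R}^n}\delta^\ep(y)\,dy\right| + \|\rho\delta^\ep\|_{L^p({\mathbb R}^n)}\right],
\]
so it remains only to estimate the bracket for the specific $\delta^\ep$ in \eqref{eq:del}.

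First I would dispose of the first term. Because $\delta^\ep$ factorizes as in \eqref{eq:dela} and each one-dimensional factor $\frac1\ep\phi(\cdot/\ep)$ has unit integral by \eqref{eq:delb}, Fubini's theorem gives $\int_{{\mathbb R}^n}\delta^\ep(y)\,dy=\prod_{i=1}^n\int_{\mathbb R}\phi(s)\,ds=1$, which is precisely \eqref{eq:del2a}; hence the first term vanishes identically. For the second term, recall $\rho(x)=x$, so that $\|\rho\delta^\ep\|_{L^p({\mathbb R}^n)}^p=\int_{{\mathbb R}^n}|y|^p|\delta^\ep(y)|^p\,dy$, and the moment bound \eqref{eq:del2b} yields $\|\rho\delta^\ep\|_{L^p({\mathbb R}^n)}\le C_3^{1/p}\,\ep^{1-n+\frac{n}{p}}$, since $\frac1p(p-pn+n)=1-n+\frac{n}{p}$. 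Substituting both observations into the displayed inequality produces \eqref{eq:17} with $C=C_0C_1C_3^{1/p}$, whose dependence on $n$, $p$, $\|J_X\|_{L^\infty(\Theta)}$, $K$, $\|\phi\|_{L^\infty({\mathbb R})}$ and $\Omega$ is inherited from that of $C_0$, $C_1$ and $C_3$.

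There is no genuine obstacle at this stage: all the analytic work---the Taylor expansion of the test function and the scaling argument behind the factor $\frac{p'}{p'-n}$---has already been carried out in Proposition \ref{la:reg-err}, while the stability estimate \eqref{eq:11z} of (A1$_p$) converts the $W^{-1,p}$ force bound into the $W^{1,p}\times L^p$ solution bound. The only point deserving a line of care is the scaling exponent in \eqref{eq:del2b}: the substitution $y=\ep z$ generates $\ep^{-pn}$ from $|\delta^\ep|^p$, $\ep^p$ from $|y|^p$, and $\ep^n$ from the Jacobian, for a net $\ep^{p-pn+n}$. Since this is already recorded as \eqref{eq:del2b}, it may be taken as given, and Proposition \ref{la:err0} is thus essentially a corollary of Proposition \ref{la:8}.
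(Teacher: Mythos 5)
Your proposal is correct and follows exactly the route the paper intends: Proposition \ref{la:err0} is presented there as an immediate consequence of Proposition \ref{la:8} combined with the identities \eqref{eq:del2a} and \eqref{eq:del2b}, which is precisely your argument, including the exponent arithmetic $\frac1p(p-pn+n)=1-n+\frac{n}{p}$. Nothing is missing.
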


  \begin{remark}
   Proposition \ref{la:err0} remains valid for a bounded Lipschitz
   domain $\Omega$.  
  \end{remark}

\section{Discretization by finite element method} 
\label{sec:6}

This section is devoted to a study of the finite element approximation
applied to \eqref{eq:1e}. We introduce a family of regular triangulations
$\{\mathscr{T}_h\}_h$ of $\Omega$ (see \cite[(4.4.16)]{bs08}).
Hereinafter, we set $h=\max\{h_T\mid T\in \mathscr{T}_h\}$, where $h_T$ denotes the diameter of $T$.
For any $T\in \mathscr{T}_h$, 
let $\mathscr{P}_1(T)$ be the set of all polynomials defined on $T$ of degree $\le 1$, 
and let ${\mathscr{B}}(T)=[\mathscr{P}_1(T)
\oplus \operatorname{span}\{\lambda_{1}\lambda_{2}\cdots\lambda_{n+1}\}]^n$, where
$\lambda_{i}$ are the barycentric coordinates of $T$. Below, we consider
the P1-b/P1 element (MINI element) approximation. That is, set  
\begin{gather*}
V_h= \{v_h\in C(\overline{\Omega})^n\cap W_0^{1,2}(\Omega)^n \mid v_h|_T\in
 {\mathscr{B}}(T)\ (\forall T\in \mathscr{T}_{h})\},\\
Q_h=\{q_h\in C(\overline{\Omega})\cap L_0^2(\Omega) \mid q_h|_T\in \mathscr{P}_1(T)\ (\forall T\in \mathscr{T}_h)\}.
\end{gather*}
It is well-known that (see \cite[Lemma II.4.1]{gr86}) a pair of $V_h$ and $Q_{h}$ satisfies the uniform Babu{\v s}ka--Brezzi (inf--sup) condition
\begin{equation*}
\sup_{v_h\in V_h}\frac{b(v_h,q_h)}{\|v_h\|_{W^{1,2}}}\ge
 \beta\|q_h\|_{L^2}\qquad (q_h\in Q_{h}),
\end{equation*} 
where $\beta>0$ is independent of $h$. 

 \begin{remark}
We deal with the P1-b/P1 element only for the sake of simple
  presentation. An arbitrary pair of 
conforming finite element spaces $V_h\subset W^{1,2}_0(\Omega)^n$ and
  $Q_h\subset L_0^2(\Omega)$ satisfying  the uniform Babu{\v s}ka--Brezzi
  condition is available. 
 \label{rem:3.5}
 \end{remark}

We state the finite element approximation to \eqref{eq:1e}: 
Find $(u^\ep_h,\pi^\ep_h)\in V_h\times Q_h$ such that 
\begin{subequations}
\label{eq:21}
\begin{align}
 & a(u^\ep_h,v_h) + b(\pi^\ep_h,v_h)  
=(f^\ep,v_h)_{L^2} && (\forall v_h \in V_h), \label{eq:21a}\\
&b(q_h,u^\ep_h) = 0 && (\forall q_h \in Q_h). \label{eq:21b}
\end{align}
\end{subequations}

The finite element approximation $(w_h,r_h)\in V_h\times Q_h$ of 
\eqref{eq:w} is defined similarly. 

We make the following assumption:

\smallskip

\noindent \textbf{(A2$_p$)} For a given $g\in L^p(\Omega)^n$, the finite element
approximation $(w_h,r_h)\in V_h\times Q_h$ of \eqref{eq:w} admits
\[
 \|w-w_h\|_{W^{1,p}}+\|r-r_h\|_{L^p}
\le C_4 \inf_{(v_h,q_h)\in V_h\times Q_h}
\left( \|w-v_h\|_{W^{1,p}}+\|r-q_h\|_{L^p}\right),
\]
where $C_4$ denotes a positive constant depending only on $p$ and
$\Omega$, and $(w,r)\in W^{1,p}_0(\Omega)^n\times L^p_0(\Omega)$ the weak
solution of \eqref{eq:w}.

\begin{remark}
 \label{rem:3.7}
 If $\Omega$ is a convex polyhedral domain in $\mathbb{R}^n$ with
 $n=2,3$ and $\{\mathcal{T}_h\}_h$ is quasi-uniform (see \cite[(4.4.15)]{bs08}),
 then 
 (A2$_p$) is actually satisfied for $1<p\le\infty$; see Corollaries 4,
 5, 6 and Remark 4 of a sophisticated paper \cite{gns15}.   
However, we directly assume (A2$_p$)  instead
 of the shape condition on $\Omega$ as before.
\end{remark}

Applying the standard interpolation/projection error estimates, we obtain
the following. 

 \begin{prop}
  \label{la:err1}
  Let $1\le p <\infty$ and suppose that {\rm (A1$_p$)} and {\rm
  (A2$_p$)} are satisfied. 
Let $(u^\ep,\pi^\ep)$ and $(u^\ep_h,\pi^\ep_h)$ be solutions of \eqref{eq:1e} and \eqref{eq:21}, respectively. Then, we have
\begin{equation}
 \label{eq:dis-err}
\|u^\ep-u^\ep_h\|_{W^{1,p}}+\|\pi^\ep-\pi^\ep_h\|_{L^p} \le C h \|f^\ep\|_{L^p},
\end{equation}
where $C$ denotes a positive constant depending only on $p$ and $\Omega$.  
 \end{prop}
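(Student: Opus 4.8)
The plan is to combine three ingredients: the elliptic regularity contained in (A1$_p$), the quasi-optimality contained in (A2$_p$), and standard interpolation error estimates. The starting observation is that, since $\delta^\ep_a\in L^\infty(\Omega)$ and $\Omega$ is bounded, the regularized force $f^\ep$ defined by \eqref{eq:fe} belongs to $L^\infty(\Omega)^n\subset L^2(\Omega)^n\cap L^p(\Omega)^n\subset W^{-1,2}(\Omega)^n\cap L^p(\Omega)^n$. Hence the second part of (A1$_p$) applies to the weak solution $(u^\ep,\pi^\ep)$ of \eqref{eq:1e}, providing the extra regularity $(u^\ep,\pi^\ep)\in W^{2,p}(\Omega)^n\times W^{1,p}(\Omega)$ together with the a priori bound $\|u^\ep\|_{W^{2,p}}+\|\pi^\ep\|_{W^{1,p}}\le C_2\|f^\ep\|_{L^p}$ from \eqref{eq:1a1z}. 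This is precisely what will convert an abstract best-approximation error into an explicit power of $h$.

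Next I would invoke (A2$_p$) with $g=f^\ep$. Since $(u^\ep_h,\pi^\ep_h)$ is the finite element approximation of \eqref{eq:1e} defined by \eqref{eq:21}, (A2$_p$) yields
\[
\|u^\ep-u^\ep_h\|_{W^{1,p}}+\|\pi^\ep-\pi^\ep_h\|_{L^p}\le C_4\inf_{(v_h,q_h)\in V_h\times Q_h}\left(\|u^\ep-v_h\|_{W^{1,p}}+\|\pi^\ep-q_h\|_{L^p}\right).
\]
It then remains to exhibit one admissible pair $(v_h,q_h)$ for which both terms are $O(h)$, and to insert the regularity bound from the first step.

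For the velocity I would take $v_h$ to be a quasi-interpolant of $u^\ep$ into the piecewise-linear part $\mathscr{P}_1(T)^n\subset{\mathscr{B}}(T)$ of $V_h$; since $u^\ep\in W^{2,p}$, the standard estimate $\|u^\ep-v_h\|_{W^{1,p}}\le Ch|u^\ep|_{W^{2,p}}$ holds. For the pressure I would take $q_h$ to be a quasi-interpolant of $\pi^\ep$ into the continuous piecewise-linear space underlying $Q_h$; because $\pi^\ep\in W^{1,p}$, this gives $\|\pi^\ep-q_h\|_{L^p}\le Ch|\pi^\ep|_{W^{1,p}}$. One minor correction is needed: elements of $Q_h$ have zero mean while the quasi-interpolant need not. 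Subtracting its mean and using $\int_\Omega\pi^\ep~dx=0$ together with the embedding $L^p(\Omega)\hookrightarrow L^1(\Omega)$ shows that the correction is itself $O(h|\pi^\ep|_{W^{1,p}})$, so the constraint is harmless. Substituting these two estimates into the quasi-optimality bound and then using \eqref{eq:1a1z} produces the asserted inequality with $C$ depending only on $p$ and $\Omega$.

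The main technical point — rather than a deep obstacle — is the choice of interpolation operator. Because the range $1\le p<\infty$ includes $p=1$, and $W^{2,1}(\Omega)$ fails to embed into $C(\overline\Omega)$ when $n=3$, the nodal Lagrange interpolant is not available in general; I would therefore use a Scott--Zhang (or Cl\'ement) quasi-interpolation operator, which is well defined on $W^{1,p}$ and $W^{2,p}$ for every $p\ge1$ and delivers exactly the approximation orders quoted above. The only remaining care is to verify that the generic constants absorb the shape-regularity of $\{\mathscr{T}_h\}_h$ and depend solely on $p$ and $\Omega$, as claimed in the statement.
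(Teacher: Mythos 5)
Your proposal is correct and follows exactly the route the paper intends: the paper gives no written proof, merely remarking that the result follows by ``applying the standard interpolation/projection error estimates,'' i.e.\ the regularity $(u^\ep,\pi^\ep)\in W^{2,p}\times W^{1,p}$ with $\|u^\ep\|_{W^{2,p}}+\|\pi^\ep\|_{W^{1,p}}\le C_2\|f^\ep\|_{L^p}$ from the second part of (A1$_p$) (applicable since $f^\ep\in L^\infty(\Omega)^n$), combined with the quasi-optimality of (A2$_p$) and first-order interpolation bounds. Your additional care about using a quasi-interpolant rather than the nodal interpolant for $p$ near $1$, and about the zero-mean constraint in $Q_h$, fills in details the paper leaves implicit and is entirely sound.
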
 
 
Putting together those results, we deduce the following error estimate. 

 \begin{prop}
\label{prop:error}
 Let $1\le p<\frac{n}{n-1}$ and suppose that {\rm (A1$_p$)} and {\rm
  (A2$_p$)} are satisfied. Assume $F\in L^{p'}(\Theta)$.  
Let $(u,\pi)$ and $(u^\ep_h,\pi^\ep_h)$ be solutions of \eqref{eq:1} and
  \eqref{eq:21} with \eqref{eq:del}, respectively. Then, we have
\begin{equation}
 \label{eq:err10}
  \|u-u_h^\ep\|_{W^{1,p}}+\|\pi-\pi_h^\ep\|_{L^p} \le 
C\ep^{-n+\frac{n}{p}}(\ep +h),
\end{equation}
where $C$ denotes a positive constant depending only on $n$, $p$,
  $\|J_X\|_{L^\infty(\Theta)}$, $\operatorname{meas}(\Theta)$, $K$,
  $\|\phi\|_{L^\infty(\mathbb{R})}$, $\|F\|_{L^p(\Theta)}$,
  $\|F\|_{L^{p'}(\Theta)}$ and $\Omega$.  
 \end{prop}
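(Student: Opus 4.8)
The plan is to insert the solution $(u^\ep,\pi^\ep)$ of the regularized problem \eqref{eq:1e} as an intermediate quantity and split the error by the triangle inequality,
\[
\|u-u_h^\ep\|_{W^{1,p}}+\|\pi-\pi_h^\ep\|_{L^p}
\le \bigl(\|u-u^\ep\|_{W^{1,p}}+\|\pi-\pi^\ep\|_{L^p}\bigr)
+\bigl(\|u^\ep-u_h^\ep\|_{W^{1,p}}+\|\pi^\ep-\pi_h^\ep\|_{L^p}\bigr).
\]
The first (regularization) group is controlled directly by Proposition \ref{la:err0}, giving a bound of order $\|F\|_{L^p(\Theta)}\,\ep^{1-n+\frac{n}{p}}$. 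The second (discretization) group is controlled by Proposition \ref{la:err1}, which reduces the task to estimating $\|f^\ep\|_{L^p}$, since that proposition yields a bound of order $h\|f^\ep\|_{L^p}$.

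The crux is therefore the bound on $\|f^\ep\|_{L^p}$. Starting from the representation \eqref{eq:fe}, I would apply Minkowski's integral inequality to pass the $L^p$ norm inside the $\theta$-integral,
\[
\|f^\ep\|_{L^p(\Omega)}
\le \int_\Theta |F(\theta)|\,\|\delta^\ep_{X(\theta)}\|_{L^p(\Omega)}~d\theta
\le \|\delta^\ep\|_{L^p({\mathbb R}^n)}\,\|F\|_{L^1(\Theta)},
\]
using translation invariance of the Lebesgue norm. The first factor is governed by the moment estimate \eqref{eq:del2c}, which gives $\|\delta^\ep\|_{L^p({\mathbb R}^n)}\le (C_3')^{1/p}\ep^{-n+\frac{n}{p}}$. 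For the second factor, since $\Theta$ is bounded, H\"older's inequality on $\Theta$ bounds $\|F\|_{L^1(\Theta)}$ by $\operatorname{meas}(\Theta)^{1/p}\|F\|_{L^{p'}(\Theta)}$; this is precisely where the stronger hypothesis $F\in L^{p'}(\Theta)$ is needed. Note that $p<p'$ under the restriction $p<\frac{n}{n-1}\le 2$, so this hypothesis also subsumes the $L^p$ regularity required by Proposition \ref{la:err0}. Combining gives $\|f^\ep\|_{L^p}\le C\ep^{-n+\frac{n}{p}}$ with $C$ depending only on the quantities listed in the statement.

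Substituting these two bounds back and factoring out the common power $\ep^{-n+\frac{n}{p}}$ yields the regularization contribution proportional to $\ep\cdot\ep^{-n+\frac{n}{p}}$ and the discretization contribution proportional to $h\cdot\ep^{-n+\frac{n}{p}}$, whence the claimed estimate $C\ep^{-n+\frac{n}{p}}(\ep+h)$. The argument is essentially a bookkeeping of the two previously established error bounds; the only genuine computation is the $\|f^\ep\|_{L^p}$ estimate, and the main point to get right there is that the Minkowski step forces an $L^1(\Theta)$ norm of $F$, so the hypothesis must be strong enough ($L^{p'}$, not merely $L^p$) to absorb it while keeping the powers of $\ep$ correctly aligned.
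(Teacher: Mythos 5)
Your proof is correct and follows essentially the same route as the paper: split the error through the intermediate solution $(u^\ep,\pi^\ep)$, invoke Propositions \ref{la:err0} and \ref{la:err1}, and reduce everything to the bound $\|f^\ep\|_{L^p}\le \operatorname{meas}(\Theta)^{1/p}\|F\|_{L^{p'}(\Theta)}\|\delta^\ep\|_{L^p(\mathbb{R}^n)}$ together with \eqref{eq:del2c}. The only cosmetic difference is that you obtain this last bound via Minkowski's integral inequality followed by H\"older on $\Theta$, whereas the paper applies H\"older in $\theta$ pointwise in $x$; both give the same estimate (yours in fact shows $\|F\|_{L^1(\Theta)}$ would suffice for this step).
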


 \begin{proof}
Since $f^\ep$ is defined in terms of $\delta^\ep$ given by
  \eqref{eq:del}, 
  we have by \eqref{eq:del2c}
  \begin{align*}
   \|f^\ep\|_{L^p}
   &\le
   \operatorname{meas}(\Theta)^{1/p}\|F\|_{L^{p'}(\Theta)}\|\delta^\ep\|_{L^p(\mathbb{R}^n)}\\
   & \le C\operatorname{meas}(\Theta)^{1/p}\|F\|_{L^{p'}(\Theta)}\ep^{-n+\frac{n}{p}}, 
  \end{align*}
  where $C>0$ is a constant depending only on $p$, $n$, $K$, and 
  $\|\phi\|_{L^\infty(\mathbb{R})}$.   
Hence, in view of Lemmas \ref{la:err0} and \ref{la:err1},  
\begin{align*}
\|u-u^\ep_h\|_{W^{1,p}}+\|\pi-\pi^\ep_h\|_{L^p} 
&\le \|u-u^\ep\|_{W^{1,p}}+\|\pi-\pi^\ep\|_{L^p} \\
& \qquad + \|u^\ep-u^\ep_h\|_{W^{1,p}}+\|\pi^\ep-\pi^\ep_h\|_{L^p} \\
&\le C\ep^{1-n+\frac{n}{p}}+ Ch\cdot C \ep^{-n+\frac{n}{p}}.
\end{align*}
\end{proof}

We usually take as $\ep=h$ in the immersed boundary method. Therefore,
applying Proposition \ref{prop:error} with $p=1$, we obtain the optimal
order error estimate
\begin{equation}
 \label{eq:opt1}
  \|u-u^\ep_h\|_{W^{1,1}}+\|\pi-\pi^\ep_h\|_{L^1} \le 
Ch.
\end{equation}
It should be kept in mind that this estimate is available only if {\rm (A1$_p$)} and {\rm
  (A2$_p$)} are true. However, the case $p=1$ is excluded both in
  \cite{mr07} and \cite{gns15} (see Remarks \ref{rem:2.10} and
  \ref{rem:3.7}). In conclusion, we offer the following theorem as the
  final error estimate in this paper.

  \begin{thm}
\label{thm:error}
Suppose that $\Omega$ is a convex polyhedral domain in $\mathbb{R}^n$
   with $n=2,3$. Assume that $\{\mathcal{T}_h\}_h$ is a family of
   quasi-uniform triangulations. Let
   $F\in L^{\infty}(\Theta)$. Let $(u,\pi)$ and $(u^\ep_h,\pi^\ep_h)$ be solutions of \eqref{eq:1} and
  \eqref{eq:21} with \eqref{eq:del}, respectively. Further, let
   $\ep=\gamma_1h$ with a positive constant $\gamma_1$. Then, for any
   $0<\alpha<1$, there exists a positive constant $C$ depending only on
   $\gamma_1$, $n$, $\alpha$, $\Omega$,
  $K$, $\|\phi\|_{L^\infty(\mathbb{R})}$, $\|J_X\|_{L^\infty(\Theta)}$, 
  $\operatorname{meas}(\Theta)$, and $\|F\|_{L^\infty(\Theta)}$ such
   that  
\begin{equation}
 \label{eq:opt2}
  \|u-u^\ep_h\|_{W^{1,q}}+\|\pi-\pi^\ep_h\|_{L^q} \le 
Ch^{1-\alpha}\quad\mbox{with any }1\le q\le \frac{n}{n-\alpha}
\end{equation}
   and
\begin{equation}
 \label{eq:opt3}
  \|u-u^\ep_h\|_{L^r}\le 
Ch^{1-\alpha}\quad\mbox{with}\quad r=\frac{n}{n-\alpha -1}.
\end{equation}
  \end{thm}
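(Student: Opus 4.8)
The plan is to reduce the whole statement to a single application of Proposition \ref{prop:error} with one cleverly chosen exponent, after which both \eqref{eq:opt2} and \eqref{eq:opt3} follow from elementary embeddings. Fix $0<\alpha<1$ and set $p=\frac{n}{n-\alpha}$. Because $0<\alpha<1$, one checks at once that $1<p<\frac{n}{n-1}$, and moreover $p\le 2$ for $n=2,3$ (indeed $p=\frac{2}{2-\alpha}<2$ when $n=2$, and $p=\frac{3}{3-\alpha}<\frac32$ when $n=3$). Hence, with $\Omega$ convex polyhedral and $\{\mathcal{T}_h\}_h$ quasi-uniform, both (A1$_p$) (via Remark \ref{rem:2.10}, valid for $1<p\le 2$) and (A2$_p$) (via Remark \ref{rem:3.7}, valid for $1<p\le\infty$) hold. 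Since $\Theta$ is bounded and $F\in L^\infty(\Theta)$, we have $F\in L^{p'}(\Theta)$; thus all hypotheses of Proposition \ref{prop:error} are in force.

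Next I would substitute $\ep=\gamma_1 h$ into the estimate \eqref{eq:err10}. The right-hand side becomes $C\ep^{-n+\frac np}(\ep+h)=C'h^{-n+\frac np}\cdot h=C'h^{1-n+\frac np}$, and with $p=\frac{n}{n-\alpha}$ the exponent collapses to $1-n+\frac np=1-n+(n-\alpha)=1-\alpha$, so that
\begin{equation*}
\|u-u_h^\ep\|_{W^{1,p}}+\|\pi-\pi_h^\ep\|_{L^p}\le Ch^{1-\alpha},\qquad p=\tfrac{n}{n-\alpha}.
\end{equation*}
The constant from Proposition \ref{prop:error} depends on $p$ (hence on $\alpha$) and on $\|F\|_{L^p(\Theta)},\|F\|_{L^{p'}(\Theta)}$, both of which are controlled by $\operatorname{meas}(\Theta)$ and $\|F\|_{L^\infty(\Theta)}$; this yields exactly the advertised dependence of $C$.

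To recover \eqref{eq:opt2} for general $1\le q\le\frac{n}{n-\alpha}=p$ I would invoke, termwise on $u-u_h^\ep$, $\nabla(u-u_h^\ep)$ and $\pi-\pi_h^\ep$, the continuous embedding $L^p(\Omega)\hookrightarrow L^q(\Omega)$ for bounded $\Omega$, i.e.\ $\|\cdot\|_{L^q}\le\operatorname{meas}(\Omega)^{1/q-1/p}\|\cdot\|_{L^p}$. For \eqref{eq:opt3} I would apply the Sobolev embedding $W^{1,p}(\Omega)\hookrightarrow L^{p^*}(\Omega)$ with $p^*=\frac{np}{n-p}$; a direct computation with $p=\frac{n}{n-\alpha}$ gives $p^*=\frac{n}{n-\alpha-1}=r$. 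The embedding is legitimate because $p<n$ and $n-\alpha-1>0$ for $0<\alpha<1$ and $n=2,3$, so $\|u-u_h^\ep\|_{L^r}\le C\|u-u_h^\ep\|_{W^{1,p}}\le Ch^{1-\alpha}$.

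No genuine analytic obstacle remains, since the substantive work was already carried out in Propositions \ref{la:err0}, \ref{la:err1} and \ref{prop:error}; the proof is essentially a specialization and post-processing of \eqref{eq:err10}. The only point requiring care is the bookkeeping of exponents and ranges: one must verify that the single choice $p=\frac{n}{n-\alpha}$ lands in the window $(1,\tfrac{n}{n-1})\cap(1,2]$ demanded by (A1$_p$) and (A2$_p$), that the Sobolev conjugate $p^*$ coincides \emph{exactly} with $r$, and that $p<n$ so the embedding is available. These constraints are precisely what force both the restriction $0<\alpha<1$ and the specific forms of $q$ and $r$, and they also explain why $C$ must be permitted to blow up as $\alpha\to1$.
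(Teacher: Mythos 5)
Your proposal is correct and follows essentially the same route as the paper: both choose $p=\frac{n}{n-\alpha}$ (the paper writes this as $\alpha=n(1-1/p)$), substitute $\ep=\gamma_1 h$ into \eqref{eq:err10} to get the $h^{1-\alpha}$ rate, and then obtain \eqref{eq:opt2} from the trivial embedding $L^p\hookrightarrow L^q$ and \eqref{eq:opt3} from the Sobolev embedding with $p^*=r$. Your version merely spells out the exponent bookkeeping and the verification that $p$ lies in the admissible window, which the paper leaves implicit.
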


 \begin{proof}
  As was pointed out in Remarks \ref{rem:2.10} and
  \ref{rem:3.7}, {\rm (A1$_p$)} and {\rm
  (A2$_p$)} are true for a convex polyhedral domain. 
 Setting $\alpha=n(1-1/p)$ in \eqref{eq:err10} and applying an obvious
 inequality $\|\psi\|_{L^q}\le C\|\psi\|_{L^p}$ for $1\le q\le n/(n-\alpha)$, we deduce
 \eqref{eq:opt2}. Inequality \eqref{eq:opt3} is a consequence of
 \eqref{eq:err10} and the Sobolev embedding theorem (see \cite[Theorem
 4.12, Part I, Case C]{af03}). 
 \end{proof}

\begin{remark}
 The exponent $r$ in \eqref{eq:opt3} is included in $2<r<\infty$ if
 $n=2$ and in $3/2<r<3$ if $n=3$. 
 \label{rem:3.16}
\end{remark}



\section{Numerical integration}
\label{sec:7}

In this section, we study the error caused
   by numerical integrations for computing $f^\ep$. As will be stated
   below, a simple numerical integration formula does not spoil the
   accuracy of the immersed boundary method described in Theorem \ref{thm:error}. 
    
First, we deal with the case $n=2$.
Suppose that we are given a continuous function $F(\theta)$ in
$\Theta=(c_1,d_1)$ with $c_1<d_1$. Let us introduce a partition
$c_1=\theta_0<\theta_1<\cdots<\theta_{M}=d_1$. Moreover, letting
$\theta_{-\frac12}=\theta_0$, 
$\theta_{i-\frac12}=(\theta_i+\theta_{i-1})/2$ for $1\le i\le M$, and
$\theta_{M+\frac12}=\theta_M$, we set
$\zeta_i=\theta_{i+\frac12}-\theta_{i-\frac12}$ for $0\le i\le M$.
 Further, set ${\zeta=\max_{0\le i\le
M}\zeta_i}$.
Then, we employ the midpoint rule to compute $f^\ep$, that is, 
\begin{equation}
\label{eq:71a}
 f^{\ep, \zeta} (x)
 = \sum_{i=0}^M F(\theta_i) \delta^\ep_{X(\theta_i)}(x)
\zeta_i 
= \sum_{i=0}^M F(\theta_i) \delta^\ep(x-X(\theta_i))
\zeta_i. 
\end{equation}
It is useful to express $f^{\ep,\zeta}$ as
\begin{equation}
\label{eq:71b}
 f^{\ep, \zeta} (x)
= \int_{\Theta} \hat{F}^\zeta(\theta) \delta^\ep(x-\hat{X}^\zeta(\theta))~d\theta, 
\end{equation}
where
$\hat{F}^\zeta(\theta)$ and 
$\hat{X}^\zeta(\theta)=(\hat{X}_1^\zeta(\theta),\hat{X}_2^\zeta(\theta))$
are piecewise constant functions such
that
\[
 \hat{F}^\zeta(\theta)=F(\theta_i),\quad
 \hat{X}^\zeta(\theta)=X(\theta_i)\qquad (\theta_{i-\frac12}<
 \theta\le \theta_{i+\frac12},\ 0\le i\le M).
\]
From the standard theory, we know
\begin{subequations}
 \label{eq:74}
  \begin{align}
  \|F-\hat{F}^\zeta\|_{L^\infty(\Theta)}&\le C\zeta
  |F|_{W^{1,\infty}(\Theta)},\label{eq:74a}\\
  \|X-\hat{X}^\zeta\|_{L^\infty(\Theta)^n}&\le C\zeta
  |X|_{W^{1,\infty}(\Theta)^n}, \label{eq:74b}
  \end{align}
  \end{subequations}
where $|F|_{W^{1,\infty}(\Theta)}$ denotes the seminorm in $W^{1,\infty}(\Theta)$ for example. 

For the case of $n=3$, $f^{\ep,\zeta}$ is defined similary. We introduce a suitable partition of
$\Theta=(c_1,d_1)\times (c_2,d_2)$ with $c_l<d_l$, $l=1,2$, and the
size parameter $\zeta>0$. Let $\hat{F}^\zeta(\theta)$ and 
$\hat{X}^\zeta(\theta)$ be piecewise
constant interpolations of $F(\theta)$ and $X(\theta)$,
respectively. Then, $f^\ep$ is approximated by $f^{\ep,\zeta}$ defined as \eqref{eq:71b}. For the partition of
$\Theta$, we only assume so that \eqref{eq:74} hold true.

\begin{remark}
Let $n=2$. 
If $F(\theta)$ is a periodic function, $F(c_1)=F(d_1)$, and the
partition is uniform, $f^{\ep,\zeta}$ is coincide with the trapezoidal
rule for $F(\theta) \delta^\ep(x-X(\theta))$. However, we here do not
 explicitly assume
 the periodicity for $F(\theta)$.  
\end{remark}

\begin{prop}
  \label{la:reg-err2}
Let $\delta^\ep$ be a continuous function satisfying
  \eqref{eq:dep}. Suppose that $F\in C^1(\Theta)$. 
  Then, for $1\le p<\frac{n}{n-1}$, we have
\[
  \|f-f^{\ep,\zeta}\|_{W^{-1,p}}\le C 
\left[ \left| 1- \int_{{\mathbb R}^n} \delta^\ep (y) ~dy \right|
+  \| \rho \delta^\ep\|_{L^p({\mathbb R}^n)} +\zeta+\zeta^{1-n+\frac{n}{p}}
  \right],
\]
where $\rho(x)=x$ and $C$ denotes a positive constant depending only
  on $n$, $p$, $\|J_X\|_{L^\infty(\Theta)}$, $|F|_{W^{1,\infty}(\Theta)}$ and $|X|_{W^{1,\infty}(\Theta)^n}$. 
\end{prop}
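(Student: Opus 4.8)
The plan is to estimate $\dual{f-f^{\ep,\zeta},\varphi}$ for an arbitrary $\varphi\in C_0^\infty(\Omega)^n$ and then take the supremum over $\|\varphi\|_{W^{1,p'}}\le 1$. Rather than passing through $f^\ep$, I would compare $f$ and $f^{\ep,\zeta}$ directly. By Proposition~\ref{la:f-int}, $\dual{f,\varphi}=\int_\Theta F(\theta)\cdot\varphi(X(\theta))\,d\theta$, while \eqref{eq:71a} gives $\dual{f^{\ep,\zeta},\varphi}=\sum_i\zeta_i F(\theta_i)\cdot\Phi_i$ with $\Phi_i=\int_\Omega\delta^\ep(x-X(\theta_i))\varphi(x)\,dx$. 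For $\ep$ small enough that $B(X(\theta_i),K\ep)\subset\Omega$, I would expand $\varphi$ about $X(\theta_i)$ exactly as in the proof of Proposition~\ref{la:reg-err}, namely $\varphi(x)=\varphi(X(\theta_i))+(x-X(\theta_i))\cdot\int_0^1\nabla\varphi(X(\theta_i)+t(x-X(\theta_i)))\,dt$, and substitute $y=x-X(\theta_i)$. This decomposes the pairing as $\dual{f-f^{\ep,\zeta},\varphi}=Q+S-R$, where $Q=\int_\Theta g\,d\theta-\sum_i\zeta_i g(\theta_i)$ is the midpoint quadrature error of the scalar integrand $g(\theta)=F(\theta)\cdot\varphi(X(\theta))$, where $S=\bigl(1-\int_{\mathbb R^n}\delta^\ep\bigr)\sum_i\zeta_i F(\theta_i)\cdot\varphi(X(\theta_i))$ carries the mass defect, and where $R=\sum_i\zeta_i F(\theta_i)\cdot\int_{\mathbb R^n}\delta^\ep(y)\,y\cdot\int_0^1\nabla\varphi(X(\theta_i)+ty)\,dt\,dy$ is the first-moment remainder.

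The terms $S$ and $R$ reproduce the first two bracketed quantities and are treated exactly as in Proposition~\ref{la:reg-err}. For $S$ I would bound $\sum_i\zeta_i|F(\theta_i)|\,|\varphi(X(\theta_i))|\le\|F\|_{L^\infty(\Theta)}\,\|\varphi\|_{L^\infty}\,\operatorname{meas}(\Theta)$ and use $W^{1,p'}(\Omega)\hookrightarrow L^\infty(\Omega)$, valid because $p'>n$ for $1\le p<n/(n-1)$; this yields the factor $|1-\int_{\mathbb R^n}\delta^\ep|$. For $R$, H\"older's inequality in $y$ produces $\|\rho\delta^\ep\|_{L^p(\mathbb R^n)}$ times $\bigl(\int|\nabla\varphi(X(\theta_i)+ty)|^{p'}\,dy\bigr)^{1/p'}$, and the change of variables $z=X(\theta_i)+ty$ gives the integrable factor $\int_0^1 t^{-n/p'}\,dt=p'/(p'-n)$, just as in the estimate of $I_2$ in Proposition~\ref{la:reg-err}; summing $\sum_i\zeta_i=\operatorname{meas}(\Theta)$ leaves the factor $\|\rho\delta^\ep\|_{L^p(\mathbb R^n)}$. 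The essential point is that $\delta^\ep$ is never differentiated — only its mass and its first moment appear — so both bounds are uniform in $\ep$, which is what the $\ep$-independent constant demands.

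The main work, and the origin of the two powers of $\zeta$, is the quadrature error $Q=\sum_i\int_{\mathrm{cell}_i}\bigl(g(\theta)-g(\theta_i)\bigr)\,d\theta$. I would split the integrand on each cell as $g(\theta)-g(\theta_i)=[F(\theta)-F(\theta_i)]\cdot\varphi(X(\theta))+F(\theta_i)\cdot[\varphi(X(\theta))-\varphi(X(\theta_i))]$. The first piece is controlled by $\|F-\hat F^\zeta\|_{L^\infty(\Theta)}\le C\zeta|F|_{W^{1,\infty}(\Theta)}$ from \eqref{eq:74a} and $\|\varphi\|_{L^\infty}\le C\|\varphi\|_{W^{1,p'}}$, giving the clean $O(\zeta)$ contribution. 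The second piece is the delicate one: since $\varphi$ lies only in $W^{1,p'}$, its gradient has no trace on $\Gamma$ and a first-order expansion is unavailable. Instead I would invoke Morrey's embedding $W^{1,p'}(\Omega)\hookrightarrow C^{0,1-n/p'}(\overline\Omega)$ (again $p'>n$), so that $|\varphi(X(\theta))-\varphi(X(\theta_i))|\le[\varphi]_{C^{0,\gamma}}\,|X(\theta)-X(\theta_i)|^{\gamma}$ with $\gamma=1-n/p'=1-n+n/p$; combining this with $|X(\theta)-X(\theta_i)|=|X-\hat X^\zeta|\le C\zeta|X|_{W^{1,\infty}(\Theta)}$ from \eqref{eq:74b}, with $[\varphi]_{C^{0,\gamma}}\le C\|\varphi\|_{W^{1,p'}}$, and summing $\sum_i\zeta_i=\operatorname{meas}(\Theta)$ yields exactly the $\zeta^{1-n+n/p}$ contribution. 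The conceptual crux is the recognition that the target exponent $1-n+n/p$ is precisely the H\"older exponent delivered by Morrey's theorem and that the quadrature of a merely H\"older-continuous integrand decays at that rate; everything else is a reprise of Proposition~\ref{la:reg-err}. Dividing the resulting bound on $|Q+S-R|$ by $\|\varphi\|_{W^{1,p'}}$ gives the assertion. (A dependence on $\|J_X\|_{L^\infty(\Theta)}$ would instead enter if one split off $f-f^\ep$ and bounded it by Proposition~\ref{la:reg-err}, reproducing the same two regularization terms.)
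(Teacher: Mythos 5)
Your proposal is correct and follows essentially the same route as the paper: after rewriting $f^{\ep,\zeta}$ via the piecewise-constant interpolants $\hat F^\zeta,\hat X^\zeta$, the paper expands $\varphi$ about $\hat X^\zeta(\theta)$ and splits the pairing into exactly your four pieces (your $Q$ is its $I_{11}+I_{12}$, your $S$ its $I_{13}$, your $R$ its $I_2$), bounding $I_{11}$ by \eqref{eq:74a}, $I_{12}$ by Morrey's embedding combined with \eqref{eq:74b} to get $\zeta^{1-n/p'}=\zeta^{1-n+n/p}$, and the remaining two terms as in Proposition~\ref{la:reg-err}. The only cosmetic difference is that you write sums over cells where the paper writes integrals against $\hat F^\zeta$, and you bound the mass-defect term via the $L^\infty$ embedding rather than the trace on $\Gamma$; both are valid since $p'>n$.
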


\begin{proof}
It is a just modification of the proof of Proposition
 \ref{la:reg-err}. 
 Let $\varphi \in C^\infty_0(\Omega)^n$ and express it as
  \[
   \varphi(x) = \varphi(\hat{X}^\zeta(\theta)) + (x-\hat{X}^\zeta(\theta))\cdot 
  \int_0^1 \nabla \varphi( t( x-\hat{X}^\zeta(\theta) ) +
 \hat{X}^\zeta(\theta) ) ~dt
  \]
for $x\in\mathbb{R}^n$. Using this, we have 
\begin{multline*}
  \dual{f-f^{\ep,\zeta}, \varphi}_{W^{-1,p},W_0^{1,p}}\\
 =
 \underbrace{
 \int_\Theta F(\theta) \varphi(X(\theta))~d\theta
-\int_\Theta F(\theta) \varphi(\hat{X}^\zeta(\theta))\left(\int_\Omega
\delta^\ep (x-\hat{X}^\zeta(\theta)) ~dx\right) d\theta}_{=I_1}
 \\
\underbrace{-\int_0^1\int_\Theta \hat{F}^\zeta(\theta) \int_\Omega \delta_{\hat{X}^\zeta(\theta)}^\ep (x)
(x-\hat{X}^\zeta(\theta))\cdot\nabla \varphi( t( x-\hat{X}^\zeta(\theta) ) + \hat{X}^\zeta(\theta) ) ~dxd\theta dt}_{=I_2}. 
\end{multline*}
 
To estimate $|I_1|$, we further divide it as follows: 
\begin{multline*}
I_1=
 \underbrace{\int_\Theta [F(\theta)-\hat{F}^\zeta(\theta)] \varphi(X(\theta))~d\theta}_{=I_{11}}
+\underbrace{\int_\Theta \hat{F}^\zeta(\theta)[ \varphi(X(\theta))- \varphi(\hat{X}^\zeta(\theta))]~d\theta}_{=I_{12}}\\
+\underbrace{\int_\Theta \hat{F}^\zeta(\theta)
 \varphi(\hat{X}^\zeta(\theta))\left(1-\int_\Omega\delta^\ep
 (x-\hat{X}^\zeta(\theta)) ~dx\right) d\theta}_{=I_{13}}. 
\end{multline*}

As in the proof of Proposition
 \ref{la:reg-err}, we derive
\[
 |I_{13}|\le C \|\hat{F}^\zeta\|_{L^p(\Theta)}
 \left| 1- \int_{{\mathbb R}^n} \delta^\ep (y) ~dy \right|
 \|\varphi\|_{W^{1,{p'}}(\Omega)}.
\]
By \eqref{eq:74}, we have 
\begin{align*}
 |I_{11}|
 &\le C\|F-\hat{F}^\zeta\|_{L^p(\Theta)}\|\varphi\|_{L^{p'}(\Theta)}\\
 &\le C\zeta|F|_{W^{1,\infty}(\Theta)}\|\varphi\|_{W^{1,p'}(\Omega)}.
\end{align*}
We apply Morrey's inequality to obtain 
\begin{align*}
 |I_{12}|
 &\le C\int_\Theta |\hat{F}^\zeta(\theta)|\cdot |X(\theta)-
 \hat{X}^\zeta(\theta)|^{1-n/p'}\cdot \|\varphi\|_{W^{1,p'}}~d\theta\\
 &\le C\zeta^{1-n/p'}\|\hat{F}^\zeta\|_{L^1(\Theta)} |X|_{W^{1,\infty}(\Theta)^n}\|\varphi\|_{W^{1,p'}}.
\end{align*}
Estimation for $|I_2|$ is done in exactly the way as the proof of Proposition
 \ref{la:reg-err}, that is, we deduce
\[
 |I_2|
 \le C\|\hat{F}^\zeta\|_{L^1(\Theta)} \| \rho\delta^\ep
 \|_{L^p({\mathbb R}^n)}\|{\varphi}\|_{W^{1,{p'}}}.
 \]
Finally, noting $\|\hat{F}^\zeta\|_{L^1(\Theta)}\le
 C\|F\|_{L^\infty(\Theta)}$, we get the disired inequality. 
\end{proof}

Applying 
Proposition \ref{la:reg-err2}
instead of Proposition \ref{la:reg-err}, we obtain the following
result.

  \begin{thm}
   \label{thm:error1}
   Let $(u,\pi)$ and $(u^\ep_h,\pi^\ep_h)$ be solutions of \eqref{eq:1} and
   \eqref{eq:21} with \eqref{eq:del}, respectively, where $f^\ep$ is
   replaced by $f^{\ep,\zeta}$ defined as \eqref{eq:71b}. In addition to
   assumptions of Theorem \ref{thm:error}, we assume that $F\in
   C^{1}(\Theta)$.
   Further, let
   $\zeta=\gamma_2h$ with a positive constant $\gamma_2$. Then, error
   estimates \eqref{eq:opt2} and \eqref{eq:opt3} remain true. 
 \end{thm}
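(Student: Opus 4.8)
The plan is to retrace the chain of estimates that yields Theorem~\ref{thm:error}, replacing the regularized force $f^\ep$ throughout by its midpoint quadrature $f^{\ep,\zeta}$ and, correspondingly, Proposition~\ref{la:reg-err} by Proposition~\ref{la:reg-err2}. Keeping the notation $(u^\ep_h,\pi^\ep_h)$ of the statement for the discrete solution now driven by $f^{\ep,\zeta}$, I would introduce the auxiliary continuous solution $(u^{\ep,\zeta},\pi^{\ep,\zeta})\in W^{1,p}_0(\Omega)^n\times L^p_0(\Omega)$ of \eqref{eq:1} with $f$ replaced by $f^{\ep,\zeta}$, and split
\[
\|u-u^\ep_h\|_{W^{1,p}}+\|\pi-\pi^\ep_h\|_{L^p}
\le\bigl(\|u-u^{\ep,\zeta}\|_{W^{1,p}}+\|\pi-\pi^{\ep,\zeta}\|_{L^p}\bigr)
+\bigl(\|u^{\ep,\zeta}-u^\ep_h\|_{W^{1,p}}+\|\pi^{\ep,\zeta}-\pi^\ep_h\|_{L^p}\bigr),
\]
the first group being the combined regularization-and-quadrature error and the second the discretization error.

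For the first group I would apply the stability estimate \eqref{eq:11z} of (A1$_p$) to the Stokes problem with datum $f-f^{\ep,\zeta}$ and then invoke Proposition~\ref{la:reg-err2}. Because $\delta^\ep$ is the product delta \eqref{eq:del}, the first bracketed quantity in that proposition vanishes by \eqref{eq:del2a} while $\|\rho\delta^\ep\|_{L^p(\mathbb{R}^n)}\le C\ep^{1-n+\frac{n}{p}}$ by \eqref{eq:del2b}, whence
\[
\|u-u^{\ep,\zeta}\|_{W^{1,p}}+\|\pi-\pi^{\ep,\zeta}\|_{L^p}
\le C\bigl(\ep^{1-n+\frac{n}{p}}+\zeta+\zeta^{1-n+\frac{n}{p}}\bigr).
\]

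For the second group I would repeat the proof of Proposition~\ref{la:err1} verbatim with $f^{\ep,\zeta}$ in place of $f^\ep$, which is legitimate since $f^{\ep,\zeta}\in L^\infty(\Omega)^n\subset W^{-1,2}(\Omega)^n\cap L^p(\Omega)^n$; this delivers $\|u^{\ep,\zeta}-u^\ep_h\|_{W^{1,p}}+\|\pi^{\ep,\zeta}-\pi^\ep_h\|_{L^p}\le Ch\|f^{\ep,\zeta}\|_{L^p}$. The only genuinely new computation is to confirm that $f^{\ep,\zeta}$ satisfies the same $L^p$-bound as $f^\ep$ did in the proof of Proposition~\ref{prop:error}. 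Starting from the integral representation \eqref{eq:71b} and using Minkowski's integral inequality together with $\|\hat F^\zeta\|_{L^1(\Theta)}\le C\|F\|_{L^\infty(\Theta)}$ and \eqref{eq:del2c},
\[
\|f^{\ep,\zeta}\|_{L^p}
\le\int_\Theta|\hat F^\zeta(\theta)|\,\|\delta^\ep(\cdot-\hat X^\zeta(\theta))\|_{L^p(\Omega)}~d\theta
\le\|\hat F^\zeta\|_{L^1(\Theta)}\,\|\delta^\ep\|_{L^p(\mathbb{R}^n)}
\le C\|F\|_{L^\infty(\Theta)}\,\ep^{-n+\frac{n}{p}},
\]
so the discretization error is bounded by $Ch\,\ep^{-n+\frac{n}{p}}$, exactly as the corresponding term in \eqref{eq:err10}.

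Finally I would set $\ep=\gamma_1h$, $\zeta=\gamma_2h$ and $\alpha=n(1-1/p)$, so that $1-n+\frac{n}{p}=1-\alpha$ and $-n+\frac{n}{p}=-\alpha$. The four contributions above then scale like $\ep^{1-\alpha}\sim h^{1-\alpha}$, $\zeta\sim h$, $\zeta^{1-\alpha}\sim h^{1-\alpha}$ and $h\,\ep^{-\alpha}\sim h^{1-\alpha}$; since $0<\alpha<1$ gives $h\le h^{1-\alpha}$ for $0<h\le1$, the purely quadrature term $\zeta$ is absorbed and the total is $Ch^{1-\alpha}$. Estimate \eqref{eq:opt2} then follows from $\|\psi\|_{W^{1,q}}\le C\|\psi\|_{W^{1,p}}$ for $1\le q\le p=n/(n-\alpha)$, and \eqref{eq:opt3} from the Sobolev embedding $W^{1,p}(\Omega)\hookrightarrow L^r(\Omega)$ with $1/r=1/p-1/n$, exactly as in Theorem~\ref{thm:error}. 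The substantive work has in fact already been carried out inside Proposition~\ref{la:reg-err2}, whose Morrey-inequality treatment of the term $I_{12}$ is what generates the quadrature power $\zeta^{1-n+\frac{n}{p}}$; in the present assembly the only new estimate is the $L^p$-bound on $f^{\ep,\zeta}$, and the only point requiring care is the verification that the additional $\zeta$-powers do not exceed $h^{1-\alpha}$ under the scaling $\zeta=\gamma_2h$.
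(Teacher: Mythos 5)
Your proposal is correct and follows exactly the route the paper intends: its entire proof of Theorem~\ref{thm:error1} is the one-line remark that one applies Proposition~\ref{la:reg-err2} in place of Proposition~\ref{la:reg-err} and reruns the argument of Proposition~\ref{prop:error} and Theorem~\ref{thm:error}, which is precisely the decomposition, the $L^p$-bound on $f^{\ep,\zeta}$, and the scaling $\ep=\gamma_1h$, $\zeta=\gamma_2h$, $\alpha=n(1-1/p)$ that you carry out in detail.
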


\section{Numerical experiments}
\label{sec:8}

Throughout this section, we let $\Omega = (-1, 1)^2 \subset
{\mathbb{R}^2} $ and $\Gamma=B(0,1/2)$.
We consider the stationary Stokes problem
\begin{equation*}
 -\nu \Delta u +\nabla \pi  = f + g \mbox{ in }\Omega, \quad
 \nabla \cdot u =0 \mbox{ in } \Omega,
 \quad u=0 \mbox{ on }  \partial \Omega.
\end{equation*}
Herein, we have added an extra outer force
$g = (1 , 0)$ in order to illustrate a pressure jump across $\Gamma$.  
We also set $\Theta = [0, 2\pi]$ and
\begin{equation*}
X(\theta) = \frac12 (\cos (\theta), \sin (\theta) ). 
\end{equation*}
In accordance to the simplest elasticity modeling (see
\cite{bgh07,pes02}), we take 
$F(\theta) = \kappa {\partial^2 X}/{ \partial \theta^2}$ 
with $\kappa$ is a suitable positive constant. Specifically, taking
$\kappa=2$, we set 
\begin{equation*}
F(\theta) = - \left(  \cos (\theta),  \sin (\theta) \right).
\end{equation*}


We deal with the following problem: Find $(u^\ep_h,p^\ep_h)\in V_h\times Q_h$ such that 
\begin{align*}
 & a(u^\ep_h,v_h) + b(p^\ep_h,v_h)  
=(f^{\ep,\zeta}+g,v_h)_{L^2} && (\forall v_h \in V_h), \\
&b(q_h,u^\ep_h) = 0 && (\forall q \in Q_h). 
\end{align*}
Herein, $f^{\ep,\zeta}$ is defined as \eqref{eq:71b}  
and $\zeta ={2 \pi}/{M}$, $\theta_i = i\zeta$, $0\le i \le M$. 
As a choice of $\delta^\ep$, we examine the following discrete delta function 
\begin{equation*}
\phi(s)= 
\begin{cases}
 \frac1{2} \left( 1+ \cos( \pi s) \right) &( |s| \le 1)\\
0 &(\mbox{otherwise}).
\end{cases}
\end{equation*}

For discretization, we take $\mathscr{T}_h$ as a uniform mesh composed of $2N^2$
congruent right-angle triangles; 
each side of $\Omega$ is divided into $N$ intervals of
the same length. Then each small square is decomposed into two equal 
triangles by a diagonal. Each parameters are set as follows:
\begin{equation*} 
h = \frac{\sqrt2}{N}, \quad \ep = h, \quad M=N,\quad \mbox{and} \quad \zeta = \frac{2\pi}{M}=\sqrt2\pi \ep.
\end{equation*}


To confirm convergence results described in Theorems \ref{thm:error} and \ref{thm:error1}, we compute the following quantities:
\begin{equation*}
E_h^{r(0)} = \|\tilde u-u_h\|_{L^r}, \quad 
E_h^{r(1)} = \|\tilde u - u_h\|_{W^{1,r}},  \quad
\mbox{and} \quad E_h^{r(3)} = \|\tilde \pi - \pi_h\|_{L^r},  \quad
\end{equation*}
where $(\tilde u, \tilde \pi)\in V_{h'} \times Q_{h'}$ denotes the
numerical solution using a finer triangulation
$\mathcal{T}_{h'}$. Moreover, we compute 
\begin{equation*}
\rho^{r(i)}_h = \frac{\log E_{2h}^{r(i)} - \log E_h^{r(i)}}{\log2h-\log h} \quad (i=1,2,3) .
\end{equation*}

The result is reported in Table \ref{tab:tab1}--\ref{tab:tab3}. 
We observe from Table \ref{tab:tab1} that convergence rates of the
$W^{1,1}$-error for velocity is first-order 
while that of the $L^1$-error for pressure is larger than $1$. 
It is also observed that as $p$ becomes larger, each convergence rate
becomes worse.
Nevertheless, the rate of the $L^2$-error for velocity is still larger
than $1$; see Table \ref{tab:tab3}.   
All of those numerical results support our theoretical results. 
From those numerical observations, we infer that the following
optimal-order error estimate,  
\begin{equation*}
  \|u^\ep-u^\ep_h\|_{W^{1,r}}+h \|u^\ep-u^\ep_h\|_{L^r}  \le 
   Ch^{1-\alpha}
\end{equation*}
actually holds true. However, we postpone the proof of this conjecture
for future study. 

 \begin{figure}[htb]
  \begin{center}
\includegraphics[width=.3\textwidth]{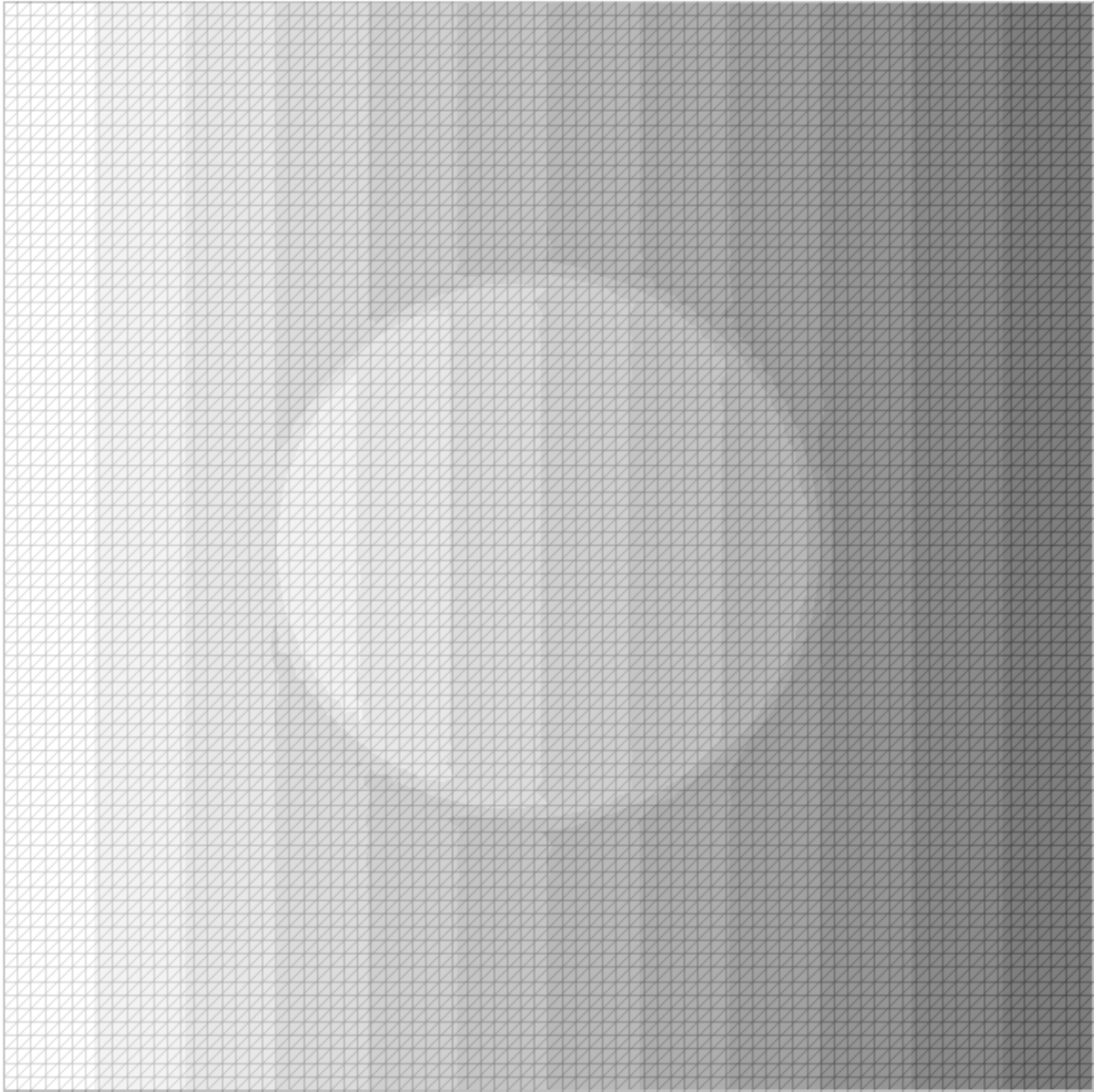}   
  \end{center}
\caption{Profile of pressure $\pi^\ep_h$ for $N=80$.  A jump of pressure
  is observed across $\Gamma$ so that $\pi^\ep_h$ becomes a
  discontinuous function.
}
\label{fig:jump}
 \end{figure}

\begin{table}[htb]
\begin{tabular}{l|lr|lr|lr}
$h$ & $E_h^{1(1)}$ & $\rho_h^{1(1)}$ & $E_h^{1(2)}$  & $\rho_h^{1(2)}$  & $E_h^{1(3)}$ & $\rho_h^{1(3)}$ \\
 \hline
0.2828 & 0.000436582 & --- & 0.0104959 & --- & 0.0508396 & --- \\
0.1414 & 0.00010817 & 2.0129 & 0.00525413 & 0.9983 & 0.0195045 & 1.382 \\
0.0707 & 2.61239e-05 & 2.0498 & 0.00262956 & 0.9986 & 0.00892026 & 1.128 \\
0.0353 & 7.315e-06 & 1.8364 & 0.0012387 & 1.086 & 0.00269059 & 1.729 \\
\end{tabular}
\caption{Convergence rates in $W^{1,r}\times L^r$ with $r=1$. }
\label{tab:tab1}
\end{table}

\begin{table}[htb]
\begin{tabular}{l|lr|lr|lr}
$h$ & $E_h^{1.5(1)}$ & $\rho_h^{1.5(1)}$ & $E_h^{1.5(2)}$  & $\rho_h^{1.5(2)}$  & $E_h^{1.5(3)}$ & $\rho_h^{1.5(3)}$ \\ 
\hline
0.2828 & 5.92276e-06 & --- & 0.000648725 & --- & 0.00932687 & --- \\
0.1414 & 9.07843e-07 & 1.803 & 0.000298948 & 0.745 & 0.00291411 & 1.118 \\
0.0707 & 1.51842e-07 & 1.719 & 0.000150538 & 0.659 & 0.00113506 & 0.906 \\
0.0353 & 2.37657e-08 & 1.783 & 7.14263e-05 & 0.717 & 0.000222016 & 1.569 \\
\end{tabular}
\caption{Convergence rates in $W^{1,r}\times L^r$ with $r=3/2$.}
\label{tab:tab2}
\end{table}

\begin{table}[htb]
\begin{tabular}{l|lr|lr|lr}
$h$ & $E_h^{2(1)}$ & $\rho_h^{2(1)}$ & $E_h^{2(2)}$  & $\rho_h^{2(2)}$  & $E_h^{2(3)}$ & $\rho_h^{2(3)}$ \\
 \hline
0.2828 & 8.89239e-08 & --- & 4.5878e-05 & --- & 0.00196196 & --- \\
0.1414 & 8.78242e-09 & 1.669 & 1.98618e-05 & 0.60390 & 0.000535873 & 0.9361 \\
0.0707 & 1.03983e-09 & 1.539 & 1.02922e-05 & 0.47422 & 0.000176548 & 0.8009 \\
0.0353 & 1.05306e-10 & 1.651 & 5.33301e-06 & 0.47426 & 2.25745e-05 & 1.4836 \\
\end{tabular}
\caption{Convergence rates in $W^{1,r}\times L^r$ with $r=2$.}
\label{tab:tab3}
\end{table}

\section*{Acknowledgement}
NS is supported by CREST, Japan Science
and Technology Agency and JSPS KAKENHI Grant Number 15H03635, 15K13454. 



\bibliographystyle{plain}
\bibliography{ibm1} 

\end{document}